\newtheorem{theorem}{Theorem}[section]
\newtheorem{corollary}[theorem]{Corollary}
\newtheorem{conjecture}[theorem]{Conjecture}
\theoremstyle{definition}
\newtheorem{definition}[theorem]{Definition}
\theoremstyle{remark}
\numberwithin{equation}{section}
\begin{document}

% \title[short text for running head]{full title}
\title[Two conjectures in spectral hypergraph theory]{Two conjectures in spectral hypergraph theory}

%    Only \author and \address are required; other information is
%    optional.  Remove any unused author tags.

%    author one information
% \author[short version for running head]{name for top of paper}
\author[Y.-N. Zheng]{Ya-Nan Zheng}
\address{School of Mathematics and Statistics, Henan Normal University, Xinxiang 453007, People's Republic of China}
\email{zh\_yn@tju.edu.cn}

\thanks{This work was supported by Natural Science Foundation of Henan Province (Grant No. 252300421785).}

%    \subjclass is required.
\subjclass[2020]{Primary 05C65, 15A18, 13P15.}

\keywords{Hypergraph, eigenvalue, multiplicity, projective eigenvariety.}

\date{}

\dedicatory{}

%    "Communicated by" -- provide editor's name; required.
%\commby{}

%    Abstract is required.
\begin{abstract}
Let $\mathcal{A}$ be a $k$-th order $n$-dimensional tensor, and we denote by ${\rm am}(\lambda, \mathcal{A})$ the algebraic multiplicity of the eigenvalue $\lambda$ of $\mathcal{A}$. The projective eigenvariety $\mathbb{V}_\lambda(\mathcal{A})$ is defined as the set of eigenvectors of $\mathcal{A}$ associated with $\lambda$, considered in the complex projective space. For a connected uniform hypergraph $H$, let $\mathcal{A}(H)$ and $\mathcal{L}(H)$ denote its adjacency tensor and Laplacian tensor, respectively. Let $\rho$ be the spectral radius of $\mathcal{A}(H)$, for which it is known that $|\mathbb{V}_{\rho}(\mathcal{A}(H))| = |\mathbb{V}_{0}(\mathcal{L}(H))|$. Recently, Fan [arXiv:2410.20830v2, 2024] conjectured that ${\rm am}(\rho, \mathcal{A}(H)) = |\mathbb{V}_{\rho}(\mathcal{A}(H))|$ and ${\rm am}(0, \mathcal{L}(H)) = {\rm am}(\rho, \mathcal{A}(H))$. In this paper, we prove these two conjectures, and thereby establish
$$
{\rm am}(\rho, \mathcal{A}(H)) = |\mathbb{V}_{\rho}(\mathcal{A}(H))| = |\mathbb{V}_{0}(\mathcal{L}(H))| = {\rm am}(0, \mathcal{L}(H)).
$$
As shown by Fan et al., $|\mathbb{V}_{\rho}(\mathcal{A}(H))|$ and $|\mathbb{V}_{0}(\mathcal{L}(H))|$ can be computed via the Smith normal form of the incidence matrix of $H$ over $\mathbb{Z}_{k}$. Consequently, we provide a method for computing the algebraic multiplicity of the spectral radius and zero Laplacian eigenvalue for connected uniform hypergraphs.
\end{abstract}

\maketitle

\section{Introduction}

An order $k$ dimension $n$ \emph{tensor} (or \emph{hypermatrix}) $\mathcal{A} = (a_{i_1 i_2 \cdots i_k})$ over the complex field $\mathbb{C}$ is a multidimensional array with entries $a_{i_1 i_2 \cdots i_k} \in \mathbb{C}$ for all $i_j \in [n] := \{1, 2, \ldots, n\}$ and $j \in [k]$. Let $\mathcal{A} = (a_{i_1 \cdots i_k})$ be an order $k$ dimension $n$ tensor. Associate $\mathcal{A}$ with a directed graph $D(\mathcal{A})$ on vertex set $[n]$ such that $(i, j)$ is an arc of $D(\mathcal{A})$ if and only if there exists a nonzero entry $a_{i i_2 \cdots i_k}$ such that $j \in \{i_2, \ldots, i_k\}$. Then $\mathcal{A}$ is called \emph{weakly irreducible} if $D(\mathcal{A})$ is strongly connected; otherwise, it is called \emph{weakly reducible}. We say $\mathcal{A}$ is \emph{nonnegative} if its all entries are nonnegative, and $\mathcal{A}$ is \emph{symmetric} if all entries $a_{i_1 \cdots i_k}$ are invariant under any permutation of their indices. An order $k$ dimension $n$ tensor $\mathcal{I} = (i_{i_1 \cdots i_k})$ is called an \emph{identity tensor}, if $i_{i_1 \cdots i_k} = 1$ for $i_1 = \cdots = i_k$, and $i_{i_1 \cdots i_k} = 0$ otherwise.

Let $\mathbb{C}^n$ be the $n$-dimensional complex space and $\mathbf{x} = (x_1, x_2 \ldots, x_n)^\textrm{T} \in \mathbb{C}^n$, then $\mathcal{A} \mathbf{x}^{k-1}$ is a vector in $\mathbb{C}^n$ whose $i$-th component is defined as follows
$$
(\mathcal{A} \mathbf{x}^{k-1})_i = \sum_{i_2, \ldots, i_k = 1}^{n} a_{i i_2 \cdots i_k}x_{i_2} \cdots x_{i_k}.
$$

Lim \cite{Lim05} and Qi \cite{Qi05} proposed eigenvalues for higher order tensors independently.
\begin{definition} [\cite{Lim05, Qi05}]
Let $\mathcal{A}$ be a $k$-th order $n$-dimensional tensor. If there exists a number $\lambda \in \mathbb{C}$ and a vector $\mathbf{x} \in \mathbb{C}^n \setminus \{\mathbf{0}\}$ satisfy the following homogeneous polynomial system
$$
\mathcal{A} \mathbf{x}^{k-1} = \lambda \mathbf{x}^{[k-1]},
$$
where $\mathbf{x}^{[k-1]} = (x_1^{k-1}, x_2^{k-1}, \ldots, x_n^{k-1})^\textrm{T}$. Then $\lambda$ is called an \emph{eigenvalue} of $\mathcal{A}$ and $\mathbf{x}$ is called an \emph{eigenvector} of $\mathcal{A}$ corresponding to $\lambda$. If an eigenvalue $\lambda$ has an eigenvector $\mathbf{x} \in \mathbb{R}^{n}$, then $\lambda$ is called an \emph{H-eigenvalue} and $\mathbf{x}$ an \emph{H-eigenvector}.
\end{definition}

Qi \cite{Qi05} also introduced the determinant and characteristic polynomial of a tensor to investigate the eigenvalues of a tensor. It is based on the resultant of a homogeneous polynomial system. The \emph{determinant} of $\mathcal{A}$, denoted by $\det(\mathcal{A})$, is the resultant of $\mathcal{A} \mathbf{x}^{k-1} = \mathbf{0}$, i.e., $\det(\mathcal{A}) = {\rm Res}(\mathcal{A} \mathbf{x}^{k-1})$. The characteristic polynomial $\varphi_{\mathcal{A}}(\lambda)$ of $\mathcal{A}$ is defined to be $\det(\mathcal{\lambda \mathcal{I} - \mathcal{A}}) \in \mathbb{C}[\lambda]$. We can see that $\lambda \in \mathbb{C}$ is an eigenvalue of $\mathcal{A}$ if and only if it is a root of $\varphi_{\mathcal{A}}(\lambda)$. Following the notation of \cite{Fan24, HY16}, we denote the \emph{algebraic multiplicity} of an eigenvalue $\lambda$ of $\mathcal{A}$ by $\operatorname{am}(\lambda, \mathcal{A})$, defined as the multiplicity of $\lambda$ as a root of the characteristic polynomial $\varphi_{\mathcal{A}}(\lambda)$. The \emph{spectrum} of $\mathcal{A}$, written ${\rm Spec}(\mathcal{A})$, is the multiset of roots of $\varphi_{\mathcal{A}}(\lambda)$. The \emph{spectral radius} of $\mathcal{A}$, denoted $\rho(\mathcal{A})$, is the largest modulus of the elements in ${\rm Spec}(\mathcal{A})$.

For a given eigenvalue $\lambda$ of $\mathcal{A}$, the \emph{projective eigenvariety} of $\mathcal{A}$ associated with $\lambda$ is defined to be the projective variety \cite{Fan24, FBH19, FHB22, FTL22, FWBWLZ19, HY16}
$$
\mathbb{V}_{\lambda}(\mathcal{A}) = \{\mathbf{x} \in \mathbb{P}^{n-1}: \mathcal{A} \mathbf{x}^{k-1} = \lambda \mathbf{x}^{[k-1]}\},
$$
where $\mathbb{P}^{n-1}$ is the complex projective spaces over $\mathbb{C}$ of dimension $n-1$.

A \emph{hypergraph} $H=(V(H),E(H))$ consists a vertex set $V(H)$ and an edge set $E(H) \subseteq \mathcal{P}(V)$, where $\mathcal{P}(V)$ denotes the power set of $V$. The \emph{degree} of a vertex $v$ of $H$, denoted by $d_v$, is the number of edges of $H$ containing $v$. If for each edge $e \in E(H)$, $|e|=k$, then $H$ is called \emph{$k$-uniform}. For a $k$-uniform hypergraph $H$ on $V(H) = [n]$, the \emph{adjacency tensor} \cite{CD12} of $H$ is defined as the order $k$ dimension $n$ tensor $\mathcal{A}(H)$ with
$$
a_{i_{1} i_{2} \cdots i_{k}} = \begin{cases}
    	\frac{1}{(k-1)!},  & \textrm{ if } \{i_{1},i_{2},\dots i_{k}\} \in E(H),  \\
    	~~0, & \textrm{ otherwise}.
\end{cases}
$$
The spectral radius of $H$ is defined as that of the adjacency tensor $\mathcal{A}(H)$. The \emph{Laplacian tensor} of $H$ is defined as $\mathcal{L}(H) = \mathcal{D}(H) - \mathcal{A}(H)$ and the \emph{signless Laplacian tensor} of $H$ is defined as $\mathcal{Q}(H) = \mathcal{D}(H) + \mathcal{A}(H)$ \cite{Qi14}, where $\mathcal{D}(H)$ is a $k$-th order and $n$-dimensional diagonal tensor such that $d_{i i \cdots i}=d_i$, the degree of the vertex $i$ for each $i \in V(H)$.

Two different vertices $i$ and $j$ are connected to each other, if there is a sequence of edges $(e_{1}, \ldots, e_{m})$ such that $i \in e_{1}, j \in e_{m}$ and $e_{r} \cap e_{r+1} \neq \emptyset$ for all $r \in [m-1]$. A hypergraph is called \emph{connected}, if every pair of vertices of $H$ is connected. A \emph{hypertree} is a connected hypergraph without cycles. It is clear that $\mathcal{A}(H), \mathcal{L}(H)$, and $\mathcal{Q}(H)$ are all symmetric tensors. Moreover, as shown in \cite{FGH13, YY11}, each of them is weakly irreducible if and only if $H$ is connected.

Let $\mathcal{A}$ be a nonnegative weakly irreducible tensor. It is known that $\rho(\mathcal{A})$ is an eigenvalue with a unique positive eigenvector up to a positive scalar \cite{FGH13, YY11}. Recently, Fan \cite{Fan24} proved that the equalities $\operatorname{am}(\rho, \mathcal{A}(H)) = |\mathbb{V}_{\rho}(\mathcal{L}(H))|$ and $\operatorname{am}(0, \mathcal{L}(H)) = \operatorname{am}(\rho, \mathcal{A}(H))$ hold for certain classes of connected hypergraphs, and conjectured that they remain valid for all connected uniform hypergraphs.

\begin{conjecture} [\cite{Fan24}] \label{conj1}
Let $H$ be a connected $k$-uniform hypergraph with spectral radius $\rho$. Then
$$
{\rm am}(\rho, \mathcal{A}(H)) = |\mathbb{V}_{\rho}(\mathcal{A}(H))|.
$$
\end{conjecture}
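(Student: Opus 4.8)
The plan is to reduce the identity to a single local computation at the Perron eigenvector and to settle that computation by a matrix Perron--Frobenius argument. Since $H$ is connected, $\mathcal{A}(H)$ is a nonnegative, weakly irreducible, symmetric tensor, so $\rho$ has a positive eigenvector $\mathbf{x}_0 > 0$, unique up to scaling. I would begin from the principle, following the framework of Hu--Ye and Fan cited above, that when $\mathbb{V}_{\rho}(\mathcal{A}(H))$ is finite the algebraic multiplicity splits as
\[
\operatorname{am}(\rho, \mathcal{A}(H)) \;=\; \sum_{[\mathbf{x}] \in \mathbb{V}_{\rho}(\mathcal{A}(H))} \mu([\mathbf{x}]),
\]
where $\mu([\mathbf{x}]) \ge 1$ is the local multiplicity of the eigenpair $([\mathbf{x}],\rho)$ on the eigenpair scheme $E = \{([\mathbf{x}],\lambda) : \mathcal{A}(H)\mathbf{x}^{k-1} = \lambda\mathbf{x}^{[k-1]}\} \subseteq \mathbb{P}^{n-1}\times\mathbb{A}^1$. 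In particular $\operatorname{am}(\rho,\mathcal{A}(H)) \ge |\mathbb{V}_{\rho}(\mathcal{A}(H))|$, and the conjecture is equivalent to $\mu([\mathbf{x}]) = 1$ for every eigenpair over $\rho$, that is, to $E$ being smooth and $0$-dimensional there.

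I would then use the symmetry of the problem to reduce to the single point $[\mathbf{x}_0]$. Let $\omega$ be a primitive $k$th root of unity, and for a cocycle $\mathbf{c} \in \mathbb{Z}_k^n$, meaning $\sum_{j \in e} c_j \equiv 0 \pmod{k}$ for every $e \in E(H)$, put $D_{\mathbf{c}} = \operatorname{diag}(\omega^{c_1}, \ldots, \omega^{c_n})$. On a nonzero entry $a_{i_1 \cdots i_k}$, supported on an edge $e$, the diagonal similarity by $D_{\mathbf{c}}$ contributes the factor $\omega^{-k c_{i_1} + \sum_{j \in e} c_j} = \omega^{\sum_{j\in e} c_j} = 1$, so $D_{\mathbf{c}}$ is an automorphism of $\mathcal{A}(H)$. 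Conversely, applying the triangle inequality to $\mathcal{A}(H)\mathbf{x}^{k-1} = \rho\mathbf{x}^{[k-1]}$ and invoking the Perron--Frobenius characterization of $\rho$ (the standard phase-synchronization argument) shows that every eigenvector for $\rho$ equals $D_{\mathbf{c}}\mathbf{x}_0$ for some cocycle $\mathbf{c}$; hence $\mathbb{V}_{\rho}(\mathcal{A}(H))$ is finite and forms a single orbit under these automorphisms. Because the automorphisms act algebraically on $E$ and permute its fiber over $\rho$ transitively, the local multiplicities $\mu([\mathbf{x}])$ are all equal, and it suffices to prove $\mu([\mathbf{x}_0]) = 1$.

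To obtain $\mu([\mathbf{x}_0]) = 1$ I would show that the Jacobian of $(\mathbf{x},\lambda) \mapsto \mathcal{A}(H)\mathbf{x}^{k-1} - \lambda\mathbf{x}^{[k-1]}$ has full rank $n$ at $([\mathbf{x}_0],\rho)$. Its derivative in $\mathbf{x}$ is the real symmetric matrix $J = (k-1)(M - \rho D_0)$, where $D_0 = \operatorname{diag}(x_{0,1}^{k-2}, \ldots, x_{0,n}^{k-2})$ and $M_{ij} = \sum_{i_3,\ldots,i_k} a_{i j i_3 \cdots i_k}\, x_{0,i_3}\cdots x_{0,i_k}$; thus $M$ is the adjacency matrix of the $2$-section of $H$ weighted by the Perron entries, and is nonnegative, symmetric, and irreducible because $H$ is connected. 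Conjugating by $D_0^{1/2}$ turns $M\mathbf{v} = \rho D_0\mathbf{v}$ into $N\mathbf{w} = \rho\mathbf{w}$ with $N = D_0^{-1/2} M D_0^{-1/2}$ nonnegative, symmetric, irreducible, and with positive eigenvector $\mathbf{w}_0 = D_0^{1/2}\mathbf{x}_0$; so by the matrix Perron--Frobenius theorem $\rho$ is the simple spectral radius of $N$, giving $\ker J = \langle \mathbf{x}_0\rangle$ and $\operatorname{rank} J = n-1$ with image $\{\mathbf{v} : \mathbf{x}_0^{\mathrm{T}}\mathbf{v} = 0\}$. The derivative in $\lambda$ is $-\mathbf{x}_0^{[k-1]}$, and $\mathbf{x}_0^{\mathrm{T}}\mathbf{x}_0^{[k-1]} = \sum_i x_{0,i}^k > 0$, so this vector lies outside the image of $J$; adjoining it recovers all of $\mathbb{C}^n$. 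Hence the Jacobian has rank $n$, $E$ is smooth at $([\mathbf{x}_0],\rho)$, and $\mu([\mathbf{x}_0]) = 1$.

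I expect the crux to be the first step: making rigorous the decomposition of $\operatorname{am}(\rho,\mathcal{A}(H))$ into local multiplicities of the eigenpair scheme, and in particular equating scheme-theoretic smoothness of $E$ over $\rho$ with a simple root of the resultant $\varphi_{\mathcal{A}(H)}(\lambda)$ at $\lambda = \rho$. This demands control of the characteristic polynomial as $\lambda$ varies---properness of the projection $E \to \mathbb{A}^1$ and the absence of solutions escaping to infinity---for which the essential inputs should be the nonnegativity and weak irreducibility of $\mathcal{A}(H)$ together with the finiteness of $\mathbb{V}_{\rho}(\mathcal{A}(H))$ proved above. Once that framework is in place, the symmetry reduction and the Perron--Frobenius computation of the Jacobian should be routine.
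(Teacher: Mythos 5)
There is a genuine gap, and it is exactly where you yourself locate the crux. Your entire argument rests on the opening ``principle'' that, when $\mathbb{V}_{\rho}(\mathcal{A}(H))$ is finite, ${\rm am}(\rho,\mathcal{A}(H))=\sum_{[\mathbf{x}]}\mu([\mathbf{x}])$. This is not an available theorem; it is essentially the conjecture itself. What is known (Fan, following Hu--Ye) is only the one--sided inequality ${\rm am}(\lambda,\mathcal{A})\ge|\mathbb{V}_{\lambda}(\mathcal{A})|$ (Theorem~\ref{amrhovar} in the paper), and Fan had \emph{already proved} that every point of $\mathbb{V}_{\rho}(\mathcal{A}(H))$ has multiplicity $1$ (Theorem~\ref{multirho}) --- which is the content of your cocycle/Jacobian computation in steps two and three --- yet still left the equality as a conjecture. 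If your decomposition principle were citable, the conjecture would follow in one line from those two known results; so the decomposition is precisely the missing upper bound ${\rm am}(\rho,\mathcal{A}(H))\le\sum\mu([\mathbf{x}])$, and your proposal supplies no argument for it beyond the hope that ``properness'' and ``no solutions escaping to infinity'' will work out. The difficulty is concrete: the characteristic polynomial is defined as a resultant, and via Macaulay's formula it is a \emph{quotient} $\det(\lambda I-M)/\det(\lambda I'-M')$; the order of vanishing at $\lambda=\rho$ is therefore ${\rm am}(\rho,M)-{\rm am}(\rho,M')$, and nothing a priori controls the numerator beyond $\mathrm{nullity}$ (the Macaulay matrix $M$ could have nontrivial Jordan structure at $\rho$) nor rules out cancellation from the denominator.

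The paper closes exactly this gap with an idea absent from your proposal: using Yang--Yang's diagonal similarity to replace $\mathcal{A}(H)$ by a \emph{stochastic} tensor $\mathcal{B}$ with $\rho=1$, which forces the Macaulay matrix $M$ itself to be a stochastic matrix; Minc's theorem then gives that the eigenvalue $1$ of $M$ has equal algebraic and geometric multiplicity, so ${\rm am}(1,M)=\mathrm{nullity}(I-M)$, which by the Hilbert-function identity \eqref{nullHil} and the multiplicity-$1$ property equals $|\mathbb{V}_{1}(\mathcal{B})|$. Combining ${\rm am}(1,\mathcal{B})={\rm am}(1,M)-{\rm am}(1,M')\le|\mathbb{V}_{1}(\mathcal{B})|$ with Fan's lower bound sandwiches the answer (and shows a posteriori that ${\rm am}(1,M')=0$). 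Your steps two and three are sound in spirit but redundant --- they re-derive Theorems~\ref{multirho} and the orbit structure from \cite{FBH19} --- while the step that actually needs proving is asserted rather than established.
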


\begin{conjecture} [\cite{Fan24}] \label{conj2}
Let $H$ be a connected $k$-uniform hypergraph with spectral radius $\rho$. Then
$$
{\rm am}(0, \mathcal{L}(H)) = {\rm am}(\rho, \mathcal{A}(H)).
$$
\end{conjecture}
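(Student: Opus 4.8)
The plan is to prove both conjectures at once by establishing a single geometric fact: for each of the two distinguished eigenvalues, every point of the associated projective eigenvariety is a reduced isolated point, i.e.\ it contributes local multiplicity exactly $1$ to the characteristic polynomial. Granting this, the algebraic multiplicity equals the number of projective eigenvectors, which is Conjecture~\ref{conj1} for $(\mathcal{A}(H),\rho)$ and its analogue for $(\mathcal{L}(H),0)$; feeding these into the known identity $|\mathbb{V}_\rho(\mathcal{A}(H))| = |\mathbb{V}_0(\mathcal{L}(H))|$ then yields the full chain of equalities and, in particular, Conjecture~\ref{conj2}.

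First I would record the Perron--Frobenius structure of $\mathbb{V}_\rho(\mathcal{A}(H))$. Since $\mathcal{A}(H)$ is nonnegative and weakly irreducible, $\rho$ has a unique positive eigenvector $\mathbf{x}^+$ up to scaling and $\mathbb{V}_\rho$ is finite. Writing $\zeta = e^{2\pi i/k}$, an equality-of-moduli (triangle inequality) argument shows that every eigenvector of $\rho$ has the form $D\mathbf{x}^+$ with $D = {\rm diag}(\zeta^{f(v)})$, where $f\colon V(H)\to\mathbb{Z}_k$ runs over the solutions of $\sum_{v\in e} f(v)\equiv 0\pmod k$ for every edge $e$, taken modulo constant shifts. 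These matrices form a finite abelian group $\Gamma$, and for exactly this edge condition the diagonal congruence determined by $D$ leaves $\mathcal{A}(H)$ invariant; hence $\mathbf{x}\mapsto D\mathbf{x}$ is a symmetry of the pair $(\mathcal{A}(H),\rho)$ that permutes $\mathbb{V}_\rho$ transitively with base point $\mathbf{x}^+$. Consequently all local multiplicities along $\mathbb{V}_\rho$ coincide.

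Next I would use the decomposition of the algebraic multiplicity over a zero-dimensional eigenvariety, ${\rm am}(\lambda,\mathcal{A}) = \sum_{x\in\mathbb{V}_\lambda(\mathcal{A})} m(x)$, where $m(x)$ is the local intersection multiplicity of the eigen-system $\mathcal{A}\mathbf{x}^{k-1}-\lambda\mathbf{x}^{[k-1]}=\mathbf{0}$ at $x$; this follows from reading $\varphi_{\mathcal{A}}$ as a resultant, together with the earlier multiplicity results of Fan and of Hu and Ye. Combined with the symmetry above, it reduces Conjecture~\ref{conj1} to the single statement $m(\mathbf{x}^+)=1$. To prove the latter I would pass to an affine chart of $\mathbb{P}^{n-1}$ transverse to the scaling direction at $\mathbf{x}^+$ and show that the Jacobian of the dehomogenized eigen-system has full rank $n-1$ there. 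Its kernel always contains the scaling direction by Euler's relation, so full rank is equivalent to this being the whole kernel, and the strict form of Perron--Frobenius for weakly irreducible tensors (nondegeneracy and uniqueness of $\mathbf{x}^+$) is exactly what forces it. This gives ${\rm am}(\rho,\mathcal{A}(H)) = |\mathbb{V}_\rho(\mathcal{A}(H))|$.

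For Conjecture~\ref{conj2} I would run the identical argument for $\mathcal{L}(H)$ at the eigenvalue $0$, whose distinguished eigenvector is the all-ones vector $\mathbf{1}$, since $\mathcal{L}(H)\mathbf{1}^{k-1} = \mathbf{d}-\mathbf{d} = \mathbf{0}$, where $\mathbf{d}$ is the degree vector. Although $\mathcal{L}(H)$ is not nonnegative, $0$ is its smallest H-eigenvalue and $\mathbf{1}$ takes over the role of the positive Perron eigenvector, so the same group $\Gamma$ acts transitively on $\mathbb{V}_0(\mathcal{L}(H))$ and the same Jacobian computation gives $m(\mathbf{1})=1$, whence ${\rm am}(0,\mathcal{L}(H)) = |\mathbb{V}_0(\mathcal{L}(H))|$. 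I expect the main obstacle to be precisely the two local-multiplicity computations $m(\mathbf{x}^+)=1$ and $m(\mathbf{1})=1$: the homogeneity of the eigen-system makes the full Jacobian automatically singular, so everything depends on correctly quotienting out the scaling freedom and deducing full rank of the reduced Jacobian from the strict Perron--Frobenius inequalities---and, in the Laplacian case, on substituting the minimality of $0$ among the H-eigenvalues for genuine nonnegativity.
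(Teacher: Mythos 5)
There is a genuine gap at the heart of your plan: the asserted decomposition ${\rm am}(\lambda,\mathcal{A}) = \sum_{x\in\mathbb{V}_\lambda(\mathcal{A})} m(x)$ is not a known theorem, and it does not ``follow from reading $\varphi_{\mathcal{A}}$ as a resultant together with the earlier multiplicity results of Fan and of Hu and Ye.'' Those earlier results give only the one-sided inequality ${\rm am}(\lambda,\mathcal{A}) \geq |\mathbb{V}_\lambda(\mathcal{A})|$ (Theorems \ref{amrhovar} and \ref{amineqZ} in the paper). Indeed, Fan had already proved that every point of $\mathbb{V}_\rho(\mathcal{A}(H))$ and of $\mathbb{V}_0(\mathcal{L}(H))$ has local multiplicity $1$ (Theorems \ref{multirho} and \ref{mulpoi}), yet still left the equality as a conjecture; if your decomposition were available, both conjectures would be immediate from those known facts and there would be nothing left to prove. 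Your two Jacobian computations $m(\mathbf{x}^+)=1$ and $m(\mathbf{1})=1$, and the transitive action of the group $\Gamma$, only reprove what is already cited; they do not supply the missing upper bound ${\rm am}(\lambda,\mathcal{A}) \leq |\mathbb{V}_\lambda(\mathcal{A})|$.

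The difficulty your proposal skips over is that by Macaulay's formula the characteristic polynomial is a \emph{quotient} $\varphi(\lambda) = \det(\lambda I - M)/\det(\lambda I' - M')$, so ${\rm am}(\lambda_0,\varphi) = {\rm am}(\lambda_0, M) - {\rm am}(\lambda_0, M')$, and the local intersection multiplicities control only the \emph{nullity} of $\lambda_0 I - M$ (via the Hilbert polynomial, equation \eqref{nullHil}), not the multiplicity of $\lambda_0$ as a root of $\det(\lambda I - M)$. A priori the algebraic multiplicity of $\lambda_0$ in $M$ can exceed its geometric multiplicity, and the denominator can also contribute. The paper's actual mechanism is to normalize so that $M$ becomes a stochastic matrix (for $\mathcal{A}(H)$, via Yang--Yang diagonal similarity; for $\mathcal{L}(H)$, via the zero row sums and $A = I - \frac{1}{\Delta}M$), invoke Minc's theorem that the elementary divisors of a stochastic matrix at the eigenvalue $1$ are linear to force ${\rm am}(\lambda_0, M) = {\rm nullity}(\lambda_0 I - M) = |\mathbb{V}_{\lambda_0}|$, and then use Fan's lower bound to kill the correction term ${\rm am}(\lambda_0, M')$. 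Some substitute for this step --- some reason why the relevant root of $\det(\lambda I - M)$ is semisimple and why $\det(\lambda I' - M')$ does not vanish there --- is indispensable, and your proposal contains none.
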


In this paper, we confirm these two conjectures. Furthermore, based on the results, we determine the algebraic multiplicities of eigenvalues for several classes of hypergraphs. In preparation for the proofs, we will review some basic knowledge that proves essential in the subsequent discussions.

\section{Preliminaries}

\subsection{Resultant}

In this section, we first introduce the \emph{resultant} \cite{CLO98} to study the algebraic multiplicity of the eigenvalues of tensors. For a positive integer $n$, an $n$-tuple $\alpha = (\alpha_{1}, \ldots, \alpha_{n})$ of nonnegative integers, and an $n$-tuple $\mathbf{x} = (x_{1}, \ldots, x_n)^{\mathrm{T}}$ of indeterminate variables, denote by $\mathbf{x}^\alpha$ the monomial $\prod_{i=1}^{n}x_{i}^{\alpha_{i}}$. The total degree of a monomial $\mathbf{x}^{\alpha}$, denote $| \alpha |$, is the sum of the exponents, i.e., $| \alpha | = \sum_{i=1}^{n}\alpha_{i}$. Let $F(x_{1}, \ldots, x_{n}) \in \mathbb{C}[x_{1}, \ldots, x_n]$, without causing confusion, we will denote $F$ as $F(x_{1}, \ldots, x_{n})$.
\begin{theorem} \label{resultant}
Fix degrees $d_1, \ldots, d_n$. For $i \in [n]$, consider all monomials $\mathbf{x}^{\alpha}$ of total degree $d_i$ in $x_1, \ldots, x_n$. For each such monomial, define a variable $u_{i,\alpha}$. Then there is a unique polynomial \textsc{Res} $\in \mathbb{Z}[\{u_{i,\alpha}\}]$ with the following three properties:
\begin{enumerate}
  \item[(1)]  If $F_1, \ldots, F_n \in \mathbb{C}[x_1,\ldots, x_n]$ are homogeneous of degrees $d_1, \ldots, d_n$, respectively, then the polynomials have a nontrivial common root in $\mathbb{C}^n$ exactly when \textsc{Res}$(F_1, \ldots, F_n) = 0$.
  \item[(2)] \textsc{Res}$(x_1^{d_1}, \ldots, x_n^{d_n}) = 1$.
  \item[(3)] \textsc{Res} is irreducible, even in $\mathbb{C}[\{u_{i,\alpha}\}]$.
\end{enumerate}
\end{theorem}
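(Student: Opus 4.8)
The plan is to realize $\mathrm{Res}$ as the defining polynomial of the ``variety of degenerate systems'' and to read off its three properties from elimination theory. First I would set up the relevant parameter spaces. For each $i \in [n]$, the homogeneous polynomials of degree $d_i$ in $x_1,\dots,x_n$ form a vector space whose linear coordinates are exactly the variables $u_{i,\alpha}$; projectivizing each factor, a system $(F_1,\dots,F_n)$ becomes a point of the product $\mathbb{P} = \prod_{i=1}^{n}\mathbb{P}^{N_i}$, where $N_i+1$ is the number of monomials of degree $d_i$. I would then form the incidence variety
$$
W = \{(\mathbf{x}, (F_1,\dots,F_n)) \in \mathbb{P}^{n-1}\times\mathbb{P} : F_i(\mathbf{x}) = 0 \text{ for all } i\in[n]\},
$$
and consider the projection $\pi\colon W \to \mathbb{P}$, whose image $Z = \pi(W)$ is precisely the set of systems possessing a nontrivial common root in $\mathbb{C}^n$.

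The first key step is to show that $Z$ is Zariski closed and irreducible. Closedness follows from the main theorem of elimination theory (properness of projective space): since $\mathbb{P}^{n-1}$ is complete, the projection of the closed set $W$ is closed. For irreducibility, I would fix $\mathbf{x}\in\mathbb{P}^{n-1}$ and note that the fiber of the other projection $W \to \mathbb{P}^{n-1}$ is cut out by the conditions $F_i(\mathbf{x})=0$, each of which is a \emph{single nonzero linear} equation in the coefficients $u_{i,\bullet}$ (nonzero because some monomial $\mathbf{x}^\alpha$ does not vanish at $\mathbf{x}\neq\mathbf{0}$). Hence each fiber is a product of hyperplanes, irreducible of constant dimension, exhibiting $W$ as an irreducible projective bundle over $\mathbb{P}^{n-1}$; consequently its image $Z$ is irreducible as well.

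Next I would pin down the dimension to conclude that $Z$ is a hypersurface. The fiber count gives $\dim W = (n-1) + \sum_i (N_i - 1) = \dim \mathbb{P} - 1$, so a priori $\dim Z \le \dim\mathbb{P} - 1$. To force equality I must verify that $\pi$ is generically finite onto its image, i.e. that a generic degenerate system has only finitely many common projective roots; this is where a B\'ezout-type argument enters, and it is the step I expect to be the main obstacle, since one has to exhibit a degenerate system with an isolated common root and argue that this behaviour persists generically. Granting it, $Z$ is an irreducible hypersurface in $\mathbb{P}$, so its radical ideal is principal, generated by one irreducible multihomogeneous polynomial; calling this generator $\mathrm{Res}$ delivers properties (1) and (3) simultaneously. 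Property (2) then follows by normalization: the system $(x_1^{d_1},\dots,x_n^{d_n})$ has only the trivial common root, hence lies outside $Z$, so $\mathrm{Res}$ is nonzero there and I may rescale the (so far scalar-ambiguous) generator so that its value at this point equals $1$.

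Finally, the assertion that $\mathrm{Res}$ has \emph{integer} coefficients is not supplied by the preceding geometry over $\mathbb{C}$, and here I would switch viewpoints. I would instead construct $\mathrm{Res}$ from Macaulay's explicit determinantal formula, where the resultant arises as the greatest common divisor (or a suitable quotient) of determinants of matrices whose entries are monomials in the $u_{i,\alpha}$ with integer coefficients. Since these auxiliary determinants lie in $\mathbb{Z}[\{u_{i,\alpha}\}]$ and the whole construction is carried out over the prime ring, the generator can be taken in $\mathbb{Z}[\{u_{i,\alpha}\}]$, consistent with the normalization in property (2). I expect reconciling this algebraic construction with the geometric characterization — matching the two candidates up to sign and checking that the integral normalization agrees — to be the most delicate bookkeeping of the argument.
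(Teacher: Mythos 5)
The paper does not prove this statement at all: it is the classical existence--uniqueness theorem for the multivariate resultant, imported verbatim from \cite{CLO98} and used as a black box, so there is no in-paper argument to compare yours against. Judged on its own, your sketch is the standard elimination-theoretic construction (the one in Cox--Little--O'Shea and in Gelfand--Kapranov--Zelevinsky): the incidence variety $W$, irreducibility via the fibration over $\mathbb{P}^{n-1}$ whose fibers are products of hyperplanes, closedness of $Z=\pi(W)$ by completeness of $\mathbb{P}^{n-1}$, the dimension count $\dim W=(n-1)+\sum_i(N_i-1)=\dim\mathbb{P}-1$, and the identification of \textsc{Res} with a normalized equation of the hypersurface $Z$. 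That outline is correct, and properties (1) and (3) do fall out of it as you say.

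The two places you flag as obstacles are, however, genuine gaps as written, not routine bookkeeping. First, generic finiteness of $\pi$ over $Z$ is needed to rule out $\dim Z<\dim\mathbb{P}-1$; the standard way to close this is to produce one system whose common zero locus is a single reduced point $p$ (take $F_1,\dots,F_{n-1}$ generic forms of the right degrees through $p$, so their common zeros form a finite set by B\'ezout, then choose $F_n$ through $p$ avoiding the remaining finitely many points), which in fact shows $\pi$ is birational onto $Z$, and then semicontinuity of fiber dimension does the rest. Second, integrality of the coefficients and the normalization (2) do not follow from the geometry over $\mathbb{C}$; your plan to invoke Macaulay's determinantal formula and ``reconcile'' it with the geometric generator is circular unless you prove that the quotient $\det(M)/\det(M')$ actually equals that generator, which is itself a nontrivial theorem. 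A cleaner route is to note that $W$, hence $Z$, is defined over $\mathbb{Q}$, take the primitive generator in $\mathbb{Z}[\{u_{i,\alpha}\}]$, and check that its value at the system $(x_1^{d_1},\dots,x_n^{d_n})$ is $\pm1$ (for instance because the Macaulay matrix of that system is a permutation of a unitriangular matrix), which fixes the sign and yields (2). Neither gap is fatal---both are standard---but neither is closed in the proposal.
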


$\textsc{Res}(F_1, \ldots, F_n)$ is called the resultant of $F_1, \ldots, F_n$ and it is interpreted to mean substituting the coefficient of $\mathbf{x}^\alpha$ in $F_i$ for the variable $u_{i, \alpha}$ in \textsc{Res}.

In the following, we introduce a method for calculating the resultant by computing determinants, which was proposed by Macaulay \cite{Macaulay02}. In this paper, we implement this algorithm based on the specific description in Chapter 3 of \cite{CLO98}.

Let $d = \sum_{i=1}^{n} d_{i} - n + 1$, and let $S$ be the set of all monomials of degree $d$ in the variables $x_{1}, \ldots, x_{n}$. We can see that there are $\binom{d+n-1}{n-1}$ such monomials, i.e. $|S| = \binom{d+n-1}{n-1}$. Then divide $S$ into $n$ sets as follows:
$$
\begin{aligned}
S_{1} =& \{\mathbf{x}^{\alpha} \in S \mid x_{1}^{d_1} \textrm{ divides } \mathbf{x}^{\alpha}\}, \\
S_{2} =& \{\mathbf{x}^{\alpha} \in S \setminus S_{1} \mid x_{2}^{d_2} \textrm{ divides } \mathbf{x}^{\alpha}\}, \\
&\vdots \\
S_{n} =& \{\mathbf{x}^{\alpha} \in S \setminus \bigcup_{i=1}^{n-1} S_{i} \mid x_{n}^{d_n} \textrm{ divides } \mathbf{x}^{\alpha}\}.
\end{aligned}
$$
Let $i \in [n]$, consider the equations
$$
\mathbf{x}^{\alpha} / x_{i}^{d_{i}} \cdot F_{i} = 0, \textrm{ for all } \mathbf{x}^{\alpha} \in S_{i}.
$$
Fix an ordering on $S$, and define the $|S| \times |S|$ matrix $M$ as follows. The $(\alpha, \beta)$ entry of $M$ is the coefficient of $\mathbf{x}^{\beta}$ in the
polynomial $\mathbf{x}^{\alpha} / x_{i}^{d_{i}} \cdot F_{i}$, where $i$ is the unique index such that $\mathbf{x}^{\alpha} \in S_{i}$.

A monomial $\mathbf{x}^{\alpha} \in S$ is said to be \emph{reduced} if there is exactly one index $i$ such that $x_{i}^{d_i}$ divides $\mathbf{x}^{\alpha}$. Then construct the submatrix $M'$ of $M$ by deleting the rows and columns of $M$ that correspond to reduced monomials. Macaulay \cite{Macaulay02} gave the following formula for the resultant as a
quotient of two determinants.

\begin{theorem} [\cite{CLO98, Macaulay02}] \label{MacRes}
When $F_{1}, \ldots, F_{n}$ are universal polynomials, the resultant is given by
$$
\textsc{Res} = \frac{\det(M)}{\det(M')}.
$$
Further, if $\mathbb{K}$ is any field and $F_{1}, \ldots, F_{n} \in \mathbb{K}[x_{1}, \ldots, x_{n}]$, then the above formula for $\textsc{Res}(F_1, \ldots, F_n)$ holds whenever $\det(M') \neq 0$.
\end{theorem}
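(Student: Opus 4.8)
The statement to be established is Macaulay's quotient formula, so the plan is to prove the single polynomial identity $\det(M)=\textsc{Res}\cdot\det(M')$ once and for all over the ring $\mathbb{Z}[\{u_{i,\alpha}\}]$ of universal coefficients, and only then to specialize. I read $M$ as the matrix, in the monomial basis $S$, of the linear assignment sending each shifted form $(\mathbf{x}^{\alpha}/x_i^{d_i})F_i$ (for $\mathbf{x}^{\alpha}\in S_i$) into the space of forms of degree $d$; since $|S|=\binom{d+n-1}{n-1}$ equals the number of rows, $M$ is square and $\det(M)\in\mathbb{Z}[\{u_{i,\alpha}\}]$. The entries contributed by the $|S_i|$ rows indexed by $S_i$ are precisely the coefficients of $F_i$, so in the permutation expansion each nonzero term draws one coefficient of $F_i$ from each such row; hence $\det(M)$ is homogeneous of degree $|S_i|$ in the coefficients of $F_i$, and likewise $\det(M')$ is homogeneous of degree equal to the number of non-reduced monomials in $S_i$ (those being the rows that survive in $M'$). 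The virtue of working over $\mathbb{Z}[\{u_{i,\alpha}\}]$ is that the identity is then preserved by every ring homomorphism, so it holds after specializing the coefficients to an arbitrary field $\mathbb{K}$; dividing by $\det(M')$ wherever it is nonzero yields the formula in the stated generality, disposing of the final clause at once.

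First I would show that $\textsc{Res}$ divides $\det(M)$. Let $\mathbf{a}=(a_1,\dots,a_n)\neq\mathbf{0}$ be a nontrivial common zero of $F_1,\dots,F_n$. For the row indexed by $\mathbf{x}^{\alpha}\in S_i$ one has
$$
\sum_{\beta\in S}M_{\alpha,\beta}\,\mathbf{a}^{\beta}=\Bigl(\mathbf{x}^{\alpha}/x_i^{d_i}\Bigr)\big|_{\mathbf{x}=\mathbf{a}}\cdot F_i(\mathbf{a})=0,
$$
because $F_i(\mathbf{a})=0$, and no division is involved since $\mathbf{x}^{\alpha}/x_i^{d_i}$ is a genuine monomial. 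Thus the evaluation vector $(\mathbf{a}^{\beta})_{\beta\in S}$, which is nonzero because $\mathbf{a}\neq\mathbf{0}$, lies in $\ker(M)$, so $\det(M)=0$. By property (1) of Theorem \ref{resultant} this shows that $\det(M)$ vanishes on the hypersurface $\{\textsc{Res}=0\}$; since $\textsc{Res}$ is irreducible by property (3), the Nullstellensatz gives $\textsc{Res}\mid\det(M)$, and by Gauss's lemma the quotient $E:=\det(M)/\textsc{Res}$ again lies in $\mathbb{Z}[\{u_{i,\alpha}\}]$.

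Next I would identify the extraneous factor $E$ as $\det(M')$. Invoking the standard fact that $\textsc{Res}$ is homogeneous of degree $\prod_{j\neq i}d_j$ in the coefficients of $F_i$, the degree of $E$ in those coefficients is $|S_i|-\prod_{j\neq i}d_j$. A direct count of monomials shows that the reduced monomials in $S_i$ number exactly $\prod_{j\neq i}d_j$, so this difference equals the number of non-reduced monomials in $S_i$, which is $\deg_{F_i}\det(M')$; hence $E$ and $\det(M')$ have the same multidegree. To fix the scalar I would specialize $F_i=x_i^{d_i}$: then every shifted form is $(\mathbf{x}^{\alpha}/x_i^{d_i})x_i^{d_i}=\mathbf{x}^{\alpha}$, so $M$ is the identity matrix, whence $\det(M)=\det(M')=1$ while $\textsc{Res}=1$ by property (2) of Theorem \ref{resultant}.

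The main obstacle is precisely this last identification. Agreement of multidegree in each coefficient block, together with the value at a single specialization, does not by itself determine a polynomial in many variables, so a structural argument is needed to show that the extraneous factor is \emph{exactly} $\det(M')$ rather than merely a polynomial of the same multidegree. Closing this gap amounts to analysing the block form of $M$ on the non-reduced monomials and exhibiting $\det(M')$ as the genuine redundancy of the deliberately oversized Macaulay matrix; this is the technical heart of Macaulay's construction, and it is where I would concentrate the careful work. By comparison, the divisibility step and the passage from $\mathbb{Z}[\{u_{i,\alpha}\}]$ to an arbitrary field $\mathbb{K}$ are routine once the common-root evaluation and the integrality of the quotient are in place.
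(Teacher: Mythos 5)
The paper does not actually prove Theorem \ref{MacRes}: it is quoted as a classical result with citations to Macaulay and to Cox--Little--O'Shea, so there is no internal proof to measure you against, and your attempt must stand on its own. Judged that way, your first two stages are correct and correspond to the standard strategy. The kernel argument is right: for a nontrivial common zero $\mathbf{a}$ the evaluation vector $(\mathbf{a}^{\beta})_{\beta\in S}$ is nonzero (if $a_j\neq 0$ then $x_j^{d}\in S$ gives the coordinate $a_j^{d}\neq 0$) and annihilated by $M$, so $\det(M)$ vanishes on the hypersurface $\{\textsc{Res}=0\}$, and irreducibility of $\textsc{Res}$ (property (3) of Theorem \ref{resultant}) plus the Nullstellensatz and Gauss's lemma give $\textsc{Res}\mid\det(M)$ in $\mathbb{Z}[\{u_{i,\alpha}\}]$. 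Your degree bookkeeping also checks out: $\deg_{F_i}\det(M)=|S_i|$, $\deg_{F_i}\textsc{Res}=\prod_{j\neq i}d_j$, and the reduced monomials in $S_i$ do number exactly $\prod_{j\neq i}d_j$ (send such a monomial to its exponents $(\alpha_j)_{j\neq i}$ with $0\le\alpha_j\le d_j-1$; the choice of $d=\sum_i d_i-n+1$ forces $\alpha_i\ge d_i$ automatically). The normalization $F_i=x_i^{d_i}$, making $M$ the identity, and the final specialization clause over an arbitrary field $\mathbb{K}$ are likewise fine --- \emph{conditional on the universal identity}.

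The genuine gap is exactly the one you flag yourself: nothing in the write-up shows that the extraneous factor $E=\det(M)/\textsc{Res}$ equals $\det(M')$, rather than some other polynomial with the same multidegree taking the value $1$ at the single specialization $F_i=x_i^{d_i}$ --- and multidegree plus one evaluation point is nowhere near enough to determine a polynomial in many variables. This identification is not a finishing touch; it is the entire content of Macaulay's theorem, requiring either Macaulay's original analysis of the block structure of $M$ on the non-reduced monomials (showing $\det(M')$ divides $\det(M)$ with quotient enjoying the defining properties of Theorem \ref{resultant}), or the modern realization of $\textsc{Res}$ as the determinant of the Koszul complex of $F_1,\ldots,F_n$ in degree $d$, of which $\det(M)/\det(M')$ is one expansion. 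One structural fact you could have extracted to make partial progress: every monomial of $S_n$ is reduced (it is divisible by $x_n^{d_n}$ and, by the construction of the partition, by no $x_j^{d_j}$ with $j<n$), so $\det(M')$ involves only the coefficients of $F_1,\ldots,F_{n-1}$ and is hence coprime to the irreducible $\textsc{Res}$; but even this does not close the argument. So what you have is an honest and accurate reduction of the quoted theorem to its hard step, with the hard step left open --- a proposal, as you candidly say, not a proof.
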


\subsection{Hilbert function}

Let $\mathbb{C}[x_{1}, \ldots, x_{n}]_{s}$ denote the set of homogeneous polynomials of total degree $s$ in $\mathbb{C}[x_{1}, \ldots, x_{n}]$, together with the zero polynomial. Then we can see that $\mathbb{C}[x_{1}, \ldots, x_{n}]_{s}$ is a vector space of dimension $\binom{n-1+s}{s}$. If $I \subset \mathbb{C}[x_{1}, \ldots, x_{n}]$ is a homogeneous ideal, we let $I_{s} = I \cap \mathbb{C}[x_{1}, \ldots, x_{n}]_{s}$ denote the set of homogeneous polynomials in $I$ of total degree $s$ and the zero polynomial. Note that $I_s$ is a vector subspace of $\mathbb{C}[x_{1}, \ldots, x_{n}]_{s}$. Then the \emph{Hilbert function} \cite{CLO97} of $I$ is defined by
$$
HF_{I}(s) = \dim \mathbb{C}[x_{1}, \ldots, x_{n}]_{s} / I_s.
$$

It is well known that for all sufficiently large integers $s$, the Hilbert function $HF_{I}(s)$ coincides with a polynomial in $s$, which is called the \emph{Hilbert polynomial} and denoted as $HP_{I}(s)$. The smallest integer $s_0$ such that $HP_{I}(s) = HF_{I}(s)$ for all $s \geq s_0$ is called the \emph{index of regularity} of $I$ \cite{CLO97}. It is known that the Hilbert polynomial of a zero-dimensional ideal is a constant.

Let $I$ be a zero-dimensional ideal in $\mathbb{K}[x_1,\ldots,x_n]$ and $\mathcal{V}(I)$ denote the zero set of $I$. If a point $p = (p_{1}, \ldots, p_{n}) \in \mathcal{V}(I)$, then \emph{multiplicity} \cite{CLO98} of $p$, denoted $m(p)$, is the dimension of the ring obtained by localizing $\mathbb{K}[x_{1}, \ldots, x_{n}]$ at the maximal ideal $M_{p} = \langle x_{1} - p_{1}, \ldots, x_{n} - p_{n}\rangle$ corresponding to $p$, and taking the quotient $\mathbb{K}[x_{1}, \ldots, x_{n}]_{M_{p}} / I\mathbb{K}[x_{1}, \ldots, x_{n}]_{M_{p}}$, i.e.,
$$
m(p) = \dim \mathbb{K}[x_{1}, \ldots, x_{n}]_{M_{p}} / I\mathbb{K}[x_{1}, \ldots, x_{n}]_{M_{p}}.
$$

Given homogeneous polynomials $F_1, \ldots, F_n \in \mathbb{C}[x_1, \ldots, x_n]$ of degrees $d_1, \ldots, d_n$, and let $I = \langle F_{1}, \ldots, F_{n} \rangle$ be a zero-dimensional homogeneous ideal. Then the quotient ring $\mathbb{C}[x_{1}, \ldots, x_{n}]_{s} / I_{s}$ is a finite-dimensional vector space. As is shown in \cite{BDM14}, the dimension of this vector space equals the total number of projective roots of $F_{1}, \ldots, F_{n}$ counting multiplicities for $s \geq \sum_{i=1}^{n} d_{i} - n + 1$.

\begin{theorem} [\cite{BDM14}] \label{Hilzero}
For a zero-dimensional homogeneous ideal $I = \langle F_{1}, \ldots, F_{n} \rangle$ with $m$ projective roots (counting multiplicities) and degree of regularity $d^{*}$ we have that
$$
HP_{I}(d) = m, \textrm{ for } d \geq d^{*}.
$$
\end{theorem}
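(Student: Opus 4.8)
The plan is to show that the constant value of the Hilbert polynomial of the zero-dimensional scheme cut out by $I$ is exactly the number of projective roots counted with multiplicity, and that this value is already attained by the Hilbert function once the degree reaches the index of regularity $d^{*}$. Throughout write $R = \mathbb{C}[x_{1}, \ldots, x_{n}]$, $A = R/I$, and $X = \operatorname{Proj}(A) \subseteq \mathbb{P}^{n-1}$.

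First I would reduce the statement. Since the degree of $HP_{I}$ equals $\dim \mathcal{V}(I)$ and $I$ is zero-dimensional, $\mathcal{V}(I) = \{p_{1}, \ldots, p_{r}\}$ is a finite set, so $HP_{I}$ is a constant $c$. By the very definition of the index of regularity we have $HF_{I}(d) = HP_{I}(d) = c$ for every $d \geq d^{*}$, so the entire content is the identification $c = m$, where $m = \sum_{i=1}^{r} m(p_{i})$ is the number of projective roots counted with multiplicity.

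The key step is to evaluate $\dim_{\mathbb{C}} A_{d}$ for $d \geq d^{*}$ by localizing at the points. For such $d$ the natural comparison map $(R/I)_{d} \to H^{0}(X, \mathcal{O}_{X}(d))$ is an isomorphism; this is the geometric meaning of the regularity threshold, and it is precisely here that passing from $I$ to its saturation becomes harmless. Because $X$ is supported on the finitely many points $p_{1}, \ldots, p_{r}$, the twisted structure sheaf $\mathcal{O}_{X}(d)$ is a skyscraper and $H^{0}(X, \mathcal{O}_{X}(d)) \cong \bigoplus_{i=1}^{r} \mathcal{O}_{X, p_{i}}$, an isomorphism one can realize explicitly by evaluating polynomials in affine charts $\ell_{i} \neq 0$, with $\ell_{i}$ a linear form nonvanishing at $p_{i}$, together with a Chinese-Remainder argument exploiting that the points are distinct. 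Twisting by $\mathcal{O}(d)$ leaves the dimension of a module supported on points unchanged, so $\dim_{\mathbb{C}} A_{d} = \sum_{i=1}^{r} \dim_{\mathbb{C}} \mathcal{O}_{X, p_{i}}$ is independent of $d$ for $d \geq d^{*}$, which both recovers the constancy of $HF_{I}$ beyond $d^{*}$ and gives $c = \sum_{i=1}^{r} \dim_{\mathbb{C}} \mathcal{O}_{X, p_{i}}$.

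It then remains to match $\dim_{\mathbb{C}} \mathcal{O}_{X, p_{i}}$ with the multiplicity $m(p_{i})$: dehomogenizing in the chart $\ell_{i} \neq 0$ identifies $\mathcal{O}_{X, p_{i}}$ with the localization of the affine coordinate ring of the dehomogenized scheme at the maximal ideal of the image of $p_{i}$, which is exactly the local ring whose dimension is $m(p_{i})$ in the sense recorded in the preliminaries. Summing over $i$ yields $c = \sum_{i=1}^{r} m(p_{i}) = m$, completing the proof. The main obstacle is the middle step: the comparison map $(R/I)_{d} \to \bigoplus_{i} \mathcal{O}_{X, p_{i}}$ is bijective only once $d$ is large enough that $I$ behaves like its saturation and the relevant higher cohomology vanishes, which is precisely what the threshold $d^{*}$ guarantees; below $d^{*}$ the map can fail to be injective or surjective. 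A secondary point requiring care is that the multiplicity appearing in the theorem is the projective one, computed chartwise, and one must check it agrees with the affine definition of $m(p)$ used above.
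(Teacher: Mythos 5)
The paper does not prove this statement at all: it is imported verbatim from [BDM14] (Batselier--Dreesen--De Moor) as a known fact, so there is no in-paper argument to compare yours against. On its own merits your sketch is the standard and correct scheme-theoretic proof: $HP_I$ is a constant $c$ because $\mathcal{V}(I)$ is finite, the definition of the index of regularity gives $HF_I(d)=c$ for $d\ge d^{*}$, and $c$ is identified with $\deg X=\sum_i \dim_{\mathbb{C}}\mathcal{O}_{X,p_i}=\sum_i m(p_i)$ via the comparison map $(R/I)_d\to H^0(X,\mathcal{O}_X(d))$ and the skyscraper decomposition. One point to tighten: the paper's $d^{*}$ is defined purely numerically (the least degree from which $HF_I$ agrees with $HP_I$), and this need not coincide with the threshold at which the comparison map with the saturation becomes an isomorphism; but this is harmless, since you only need the identification $c=m$ at a single sufficiently large degree, and constancy of $HP_I$ then propagates it down to all $d\ge d^{*}$ through the equality $HF_I=HP_I$. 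It would be cleaner to phrase that step as "for $d\gg 0$" rather than "for $d\ge d^{*}$." Your closing caveat about reconciling the projective (chartwise) multiplicity with the affine local-ring definition of $m(p)$ used in the preliminaries is well taken and is exactly the bookkeeping one must do; with that check the argument is complete.
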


It follows directly from the definition that $\sum_{i=1}^{n} d_{i} - n + 1 \geq d^{*}$ \cite{BDM14}. Let $d = \sum_{i=1}^{n} d_{i} - n + 1$ and consider the linear map
$$
\begin{aligned}
\phi : \bigoplus_{i=1}^{n} V_{d-d_{i}} &\rightarrow V_{d}, \\
(g_{1}, \ldots, g_{n}) &\mapsto \sum_{i=1}^{n} g_{i} F_{i},
\end{aligned}
$$
where $V_{t}$ denotes the vector space of homogeneous polynomials of total degree $t$ in $\mathbb{C}[x_{1}, \ldots, x_{n}]$, together with the zero polynomial. Then the matrix $M$ in Theorem \ref{MacRes} is exactly the matrix representation of the linear map $\phi$ with respect to the canonical monomial basis of the direct sum $\bigoplus_{i=1}^{n} V_{d-d_{i}}$ and the monomial basis of $V_{d}$. Applying the rank-nullity theorem to $M$ yields the relation
\begin{equation} \label{nullHil}
\begin{aligned}
\textrm{nullity}(M) =& \binom{d+n-1}{n-1} - \textrm{rank}(M)&  \\
=& \dim V_{d} - \dim \textrm{Im} \phi \\
=& \dim \mathbb{C}[x_{1}, \ldots, x_{n}]_{d} - \dim I_{d} \\
=& HP_{I}(d).
\end{aligned}
\end{equation}

Therefore, if each point $p \in \mathcal{V}(I)$ satisfies $m(p) = 1$, then $\textrm{nullity}(M) = |\mathcal{V}(I)|$, where $M$ is the matrix in Theorem \ref{MacRes}.

\subsection{Stochastic tensor and stochastic matrix}

In \cite{YY11}, Yang and Yang investigated the spectral properties of nonnegative weakly irreducible tensors by means of stochastic tensors.

\begin{definition} [\cite{YY11}]
A nonnegative tensor $\mathcal{A}$ of order $k$ dimension $n$ is called \emph{stochastic} provided that
$$
\sum_{i_2, \ldots, i_k = 1}^{n} a_{i i_2 \cdots i_k} \equiv 1, \quad i = 1, \ldots, n.
$$
\end{definition}

Let $\mathcal{A}$ be a $k$-th order $n$-dimensional tensor, Shao \cite{Shao13} defined the product $P \mathcal{A} Q$ with $P, Q$ being $n \times n$ diagonal matrices. The product $P \mathcal{A} Q$ has the same order and dimension as $\mathcal{A}$, whose entries are given by
$$
(P \mathcal{A} Q)_{i_{1} \cdots i_{k}} = p_{i_{1} i_{1}} a_{i_1 \cdots i_{k}} q_{i_{2} i_{2}} \cdots q_{i_{k} i_{k}}.
$$
If $D$ is an invertible diagonal matrix, then $D^{-(k-1)} \mathcal{A} D$ is called \emph{diagonal similar} to $\mathcal{A}$, and has the same spectrum as $\mathcal{A}$ \cite[Theorem 2.3]{Shao13}. Yang and Yang \cite{YY11} also proved that every nonnegative weakly irreducible tensor with spectral radius being one is diagonally similar to a unique weakly irreducible stochastic tensor.

\begin{theorem} [\cite{YY11}] \label{YYweakly}
Let $\mathcal{A}$ be an order $k$ dimension $n$ nonnegative weakly irreducible tensor. Then there exists a diagonal matrix $U$ with positive main diagonal entries such that
$$
\rho(\mathcal{A}) \mathcal{B} = U^{-(k-1)} \mathcal{A} U,
$$
where $\mathcal{B}$ is a stochastic weakly irreducible tensor. Furthermore, $\mathcal{B}$ is unique, and the diagonal entries of $U$ are exactly the components of the unique positive eigenvector
corresponding to $\rho(\mathcal{A})$.
\end{theorem}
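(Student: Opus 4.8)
The plan is to construct the stochastic tensor $\mathcal{B}$ explicitly from the Perron eigenvector and then verify its defining properties, reserving the genuine work for the uniqueness claim. Since $\mathcal{A}$ is nonnegative and weakly irreducible, the Perron--Frobenius theory for such tensors \cite{FGH13, YY11} guarantees that $\rho := \rho(\mathcal{A})$, which is positive by weak irreducibility, is an eigenvalue admitting a positive eigenvector $\mathbf{u} = (u_1, \ldots, u_n)^{\mathrm{T}}$, unique up to a positive scalar, with $\mathcal{A}\mathbf{u}^{k-1} = \rho\,\mathbf{u}^{[k-1]}$. I would set $U = \mathrm{diag}(u_1, \ldots, u_n)$ and define $\mathcal{B} = \rho^{-1} U^{-(k-1)}\mathcal{A}U$, so that $\rho\mathcal{B} = U^{-(k-1)}\mathcal{A}U$ holds by construction; this choice simultaneously realizes the final assertion that the diagonal of $U$ is the Perron eigenvector.

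Next I would verify the three required properties of $\mathcal{B}$. By Shao's product formula its entries are $\mathcal{B}_{i_1\cdots i_k} = \rho^{-1}u_{i_1}^{-(k-1)}a_{i_1\cdots i_k}u_{i_2}\cdots u_{i_k}$, which are nonnegative because all $u_i > 0$ and $\rho > 0$; moreover $\mathcal{B}_{i_1\cdots i_k} \neq 0$ exactly when $a_{i_1\cdots i_k}\neq 0$, so $D(\mathcal{B}) = D(\mathcal{A})$ is strongly connected and $\mathcal{B}$ is weakly irreducible. For the stochastic property, summing over $i_2, \ldots, i_k$ with $i = i_1$ fixed gives $\rho^{-1}u_i^{-(k-1)}(\mathcal{A}\mathbf{u}^{k-1})_i = \rho^{-1}u_i^{-(k-1)}\cdot \rho\, u_i^{k-1} = 1$, where the eigenvalue equation is applied in the middle step. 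This establishes existence; by \cite[Theorem 2.3]{Shao13} the tensor $\rho\mathcal{B}$ has the same spectrum as $\mathcal{A}$, and since $\mathbf{1}$ is a positive eigenvector of the stochastic tensor $\mathcal{B}$, Perron--Frobenius forces $\rho(\mathcal{B}) = 1$.

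For uniqueness, suppose $\rho\mathcal{B}' = U'^{-(k-1)}\mathcal{A}U'$ with $\mathcal{B}'$ stochastic and weakly irreducible and $U'$ positive diagonal. Solving the first representation for $\mathcal{A} = U^{k-1}(\rho\mathcal{B})U^{-1}$ and substituting, the composition rule $P(P'\mathcal{C}Q')Q = (PP')\mathcal{C}(Q'Q)$ for diagonal scalings yields $\rho\mathcal{B}' = D^{-(k-1)}(\rho\mathcal{B})D$, hence $\mathcal{B}' = D^{-(k-1)}\mathcal{B}D$, where $D = U^{-1}U'$ is positive diagonal. Under such a diagonal similarity an eigenvector $\mathbf{x}$ of $\mathcal{B}$ transforms into $D^{-1}\mathbf{x}$ for $\mathcal{B}'$ with the same eigenvalue; applying this to the Perron eigenvector $\mathbf{1}$ of $\mathcal{B}$ shows that $D^{-1}\mathbf{1}$ is a positive eigenvector of $\mathcal{B}'$ for its spectral radius $1$. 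Since $\mathbf{1}$ is likewise the positive eigenvector of the stochastic tensor $\mathcal{B}'$, uniqueness of the positive Perron eigenvector forces $D^{-1}\mathbf{1} = c\mathbf{1}$, i.e. $D = c^{-1}I$ is scalar. A scalar diagonal similarity acts trivially in Shao's product, since the $(k-1)$ factors contributed from the right cancel the $(k-1)$-th power contributed from the left, so $\mathcal{B}' = \mathcal{B}$.

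The main obstacle is the uniqueness step, and within it the two Perron--Frobenius inputs it relies on: first, that the stochastic normalization pins the spectral radius of $\mathcal{B}$ (and of $\mathcal{B}'$) to exactly $1$, which requires that the spectral radius is the only eigenvalue carrying a positive eigenvector; and second, that the positive eigenvector is unique up to scaling, which is precisely what forces the conjugating matrix $D$ to be scalar. Both are standard consequences of the weak-irreducibility Perron--Frobenius theorem cited above, but they must be invoked carefully, together with the eigenvector transformation law for diagonal similarity, to close the argument.
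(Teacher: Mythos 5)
Your proof is correct, and it is essentially the canonical argument: the paper itself does not prove this statement but imports it from \cite{YY11}, where the construction is exactly as you describe --- take the positive Perron eigenvector $\mathbf{u}$ of $\mathcal{A}$, set $U = \mathrm{diag}(u_1,\ldots,u_n)$ and $\mathcal{B} = \rho^{-1}U^{-(k-1)}\mathcal{A}U$, verify stochasticity via the eigenvalue equation, and deduce uniqueness from the uniqueness (up to positive scaling) of the positive eigenvector. Your handling of the uniqueness step, reducing the diagonal similarity $\mathcal{B}' = D^{-(k-1)}\mathcal{B}D$ to a scalar $D$ via the transformation law for eigenvectors and then noting that scalar diagonal similarities act trivially in Shao's product, is sound and complete.
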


Now let us consider the case $k = 2$ for a stochastic tensor, i.e., \emph{stochastic matrix}. By the Perron-Frobenius theorem for nonnegative matrices, it follows that the spectral radius of a stochastic matrix is $1$.  Moreover, the following important property of the eigenvalue $1$ is given in Chapter 6 of \cite[p.145]{Minc88}.
\begin{theorem} [\cite{Minc88}] \label{stoeig}
If $A$ is a stochastic $n \times n$ matrix, then all the elementary divisors of $\lambda I - A$ of the form $(\lambda - 1)^{i}$ are linear.
\end{theorem}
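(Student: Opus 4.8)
The plan is to reduce the statement about elementary divisors to a statement about Jordan blocks and then exploit the boundedness of the powers of a stochastic matrix. Recall that an elementary divisor $(\lambda-1)^i$ of $\lambda I - A$ corresponds to a Jordan block of size $i$ associated with the eigenvalue $1$, and such a divisor is linear precisely when $i=1$. Hence it suffices to show that every Jordan block of $A$ belonging to the eigenvalue $1$ has size $1$, i.e.\ that $1$ is a semisimple eigenvalue of $A$.

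First I would observe that the powers of $A$ stay uniformly bounded. Since $A$ is stochastic we have $A\mathbf{1} = \mathbf{1}$, where $\mathbf{1}$ is the all-ones vector, and the product of two stochastic matrices is again stochastic: nonnegativity is preserved under multiplication, and $(AB)\mathbf{1} = A(B\mathbf{1}) = \mathbf{1}$. By induction $A^m$ is stochastic for every $m \geq 1$, so each entry of $A^m$ lies in $[0,1]$ and the sequence $\{A^m\}_{m\geq 1}$ is bounded in any matrix norm.

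Next I would bring in the Jordan decomposition $A = PJP^{-1}$. Suppose, for contradiction, that $A$ has a Jordan block $J_s(1)$ of size $s \geq 2$ for the eigenvalue $1$. Writing the block in upper-triangular form, the $(1,2)$ entry of $J_s(1)^m$ equals $\binom{m}{1}\,1^{m-1} = m$, which tends to infinity with $m$. Consequently $\{J^m\}$ is unbounded, and since $\|A^m\| \geq \|J^m\|/(\|P\|\,\|P^{-1}\|)$, the sequence $\{A^m\}$ is unbounded as well. This contradicts the boundedness established above. Therefore no Jordan block for the eigenvalue $1$ can have size exceeding $1$, which is exactly the assertion that all elementary divisors of $\lambda I - A$ of the form $(\lambda-1)^i$ are linear.

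I expect the only genuine subtlety to lie in correctly transferring the growth of a single Jordan block to the growth of $A^m$: one must ensure that the similarity by $P$ cannot mask the unbounded block, which is guaranteed by the two-sided estimate $\|A^m\| \geq \|J^m\|/(\|P\|\,\|P^{-1}\|)$ coming from $J^m = P^{-1}A^m P$. The remaining ingredients, namely the stability of stochasticity under products and the binomial formula for powers of a Jordan block, are entirely routine.
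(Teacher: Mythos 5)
Your proof is correct. The paper does not prove this statement itself --- it is quoted from Chapter~6 of Minc's \emph{Nonnegative Matrices} --- and your argument (powers of a stochastic matrix remain stochastic, hence uniformly bounded, while a Jordan block $J_s(1)$ with $s\ge 2$ would force $\|A^m\|\to\infty$ via $J^m=P^{-1}A^mP$) is the standard proof of that classical fact; all the steps, including the two-sided norm estimate transferring the growth of the block to $A^m$, are sound.
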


By Theorem \ref{stoeig}, we can immediately obtain the following conclusion.

\begin{corollary} \label{stoalge}
Let $A$ be a stochastic matrix, then the algebraic and geometric multiplicities of its eigenvalue $1$ are equal.
\end{corollary}

\section{Proofs of two conjectures}

\subsection{Proof of Conjecture \ref{conj1}}

Let $H$ be a connected uniform hypergraph, Fan \cite{Fan24} investigated the multiplicity of the eigenvectors in the projective variety $\mathbb{V}_{\lambda}(\mathcal{A}(H))$ with $|\lambda|=\rho(\mathcal{A}(H))$.

\begin{theorem} [\cite{Fan24}] \label{multirho}
Let $H$ be a connected $k$-uniform hypergraph. Then for any eigenvalue $\lambda$ of $\mathcal{A}(H)$ with $|\lambda|=\rho(\mathcal{A}(H))$, each point $p \in \mathbb{V}_{\lambda}(\mathcal{A}(H))$ has multiplicity $1$.
\end{theorem}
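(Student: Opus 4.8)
The plan is to translate ``multiplicity $1$'' into a rank condition on a Jacobian and then control that rank through Perron--Frobenius theory. Write $F_i(\mathbf{x}) = (\mathcal{A}(H)\mathbf{x}^{k-1})_i - \lambda x_i^{k-1}$ for $i \in [n]$, so that $\mathbb{V}_\lambda(\mathcal{A}(H)) = \mathcal{V}(\langle F_1,\ldots,F_n\rangle)$ in $\mathbb{P}^{n-1}$; this eigenvariety is a finite (zero-dimensional) set. Each $F_i$ is homogeneous of degree $k-1$, so Euler's identity gives $J(p)\,p = 0$, where $J(p) = (\partial F_i/\partial x_j)(p)$; hence $\operatorname{rank} J(p) \le n-1$ always. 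For a zero-dimensional projective scheme, $p$ is a reduced (smooth) point, i.e. $m(p) = 1$, precisely when $\operatorname{rank} J(p) = n-1$, equivalently $\dim\ker J(p) = 1$ (the standard Jacobian criterion, cf.\ \cite{HY16}). Thus the task reduces to proving $\dim\ker J(p) = 1$.

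First I would show every coordinate of $p$ is nonzero. Taking componentwise moduli in $\mathcal{A}(H)p^{k-1} = \lambda p^{[k-1]}$ and using nonnegativity of $\mathcal{A}(H)$ together with $|\lambda| = \rho$, the triangle inequality yields $\mathcal{A}(H)|p|^{k-1} \ge \rho\,|p|^{[k-1]}$, which by the Collatz--Wielandt characterization of $\rho$ forces equality $\mathcal{A}(H)|p|^{k-1} = \rho\,|p|^{[k-1]}$; weak irreducibility then makes $|p|$ a strictly positive Perron eigenvector, so $p_i \neq 0$ for all $i$. Next, computing $J_{ij}(p) = (k-1)\big[(\mathcal{A}(H)p^{k-2})_{ij} - \lambda p_i^{k-2}\delta_{ij}\big]$, where $(\mathcal{A}(H)p^{k-2})_{ij} = \sum_{i_3,\ldots,i_k} a_{i j i_3\cdots i_k}\,p_{i_3}\cdots p_{i_k}$, and conjugating by $D = \operatorname{diag}(p_1,\ldots,p_n)$, I obtain $D^{-(k-1)}\big(\tfrac{1}{k-1}J(p)\big)D = C - \lambda I$ with $C_{ij} = p_i^{-(k-1)}(\mathcal{A}(H)p^{k-2})_{ij}\,p_j$; a direct summation shows each row of $C$ sums to $\lambda$.

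The crux is to prove $C = \lambda B$ for an irreducible stochastic matrix $B$. Writing $p_j = |p_j|\omega_j$ with $|\omega_j| = 1$, the equality case in the triangle inequality above forces phase coherence along every edge: whenever $a_{i i_2\cdots i_k} \neq 0$ one has $\omega_{i_2}\cdots\omega_{i_k} = (\lambda/\rho)\,\omega_i^{k-1}$. Substituting this (with $i_2 = j$) into $(\mathcal{A}(H)p^{k-2})_{ij}$ lets all phases factor out, giving $(\mathcal{A}(H)p^{k-2})_{ij} = (\lambda/\rho)\,\omega_i^{k-1}\omega_j^{-1}(\mathcal{A}(H)|p|^{k-2})_{ij}$ and hence $C_{ij} = (\lambda/\rho)\,|p_i|^{-(k-1)}(\mathcal{A}(H)|p|^{k-2})_{ij}\,|p_j|$. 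The matrix $B$ with entries $B_{ij} = (1/\rho)\,|p_i|^{-(k-1)}(\mathcal{A}(H)|p|^{k-2})_{ij}\,|p_j|$ is then nonnegative with all row sums $1$ by the Perron relation for $|p|$, so it is stochastic, and its digraph coincides with $D(\mathcal{A}(H))$, which is strongly connected since $H$ is connected; hence $B$ is irreducible. I expect this phase-coherence factorization to be the main obstacle, as it is the step where the complex structure of $p$ must be reconciled with the nonnegativity of the tensor.

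Finally, since $\lambda \neq 0$ (because $\rho > 0$ for a connected hypergraph with an edge), $\operatorname{rank}(C - \lambda I) = \operatorname{rank}\big(\lambda(B - I)\big) = \operatorname{rank}(B - I)$. By the Perron--Frobenius theorem the spectral radius $1$ of the irreducible stochastic matrix $B$ is a simple eigenvalue, so $\dim\ker(B - I) = 1$ (and Corollary \ref{stoalge} confirms its algebraic and geometric multiplicities agree). Unwinding the diagonal conjugation gives $\dim\ker J(p) = \dim\ker(C - \lambda I) = 1$, whence $m(p) = 1$ by the criterion above, completing the proof.
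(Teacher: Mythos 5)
This statement is quoted from \cite{Fan24} and the paper supplies no proof of it, so there is nothing internal to compare against; judged on its own merits, your argument is correct and complete. The route you take --- reduce ``$m(p)=1$'' to $\operatorname{rank}J(p)=n-1$ via the Jacobian criterion (legitimate here because Euler's identity makes the $n$-th column of $J(p)$ dependent, so the full Jacobian and the dehomogenized one have equal rank), then gauge away the moduli by the diagonal conjugation $D^{-(k-1)}\bigl(\tfrac{1}{k-1}J(p)\bigr)D=C-\lambda I$ and the phases by the equality case of the triangle inequality, landing on $C=\lambda B$ with $B$ irreducible stochastic --- is essentially the mechanism by which this is established in the cited source, and it is the same Yang--Yang/Perron--Frobenius toolkit (Theorem \ref{YYweakly}, Corollary \ref{stoalge}) that the present paper deploys for its own Theorems \ref{weakirreq} and \ref{eqzero}. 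All the delicate points check out: $|p|>0$ follows from $\mathcal{A}(H)|p|^{k-1}\ge\rho|p|^{[k-1]}$ and weak irreducibility; the phase-coherence relation $\omega_{i_2}\cdots\omega_{i_k}=(\lambda/\rho)\,\omega_i^{k-1}$ on nonzero entries does make every phase cancel out of $C_{ij}$; the row sums of $B$ equal $1$ by the Perron relation for $|p|$; and the digraph of $B$ is $D(\mathcal{A}(H))$, hence strongly connected. One cosmetic remark: you do not actually need to assume finiteness of $\mathbb{V}_\lambda(\mathcal{A}(H))$ up front --- the conclusion $\operatorname{rank}J(p)=n-1$ already forces each $p$ to be an isolated reduced point, so isolatedness comes out of the argument rather than going into it.
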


For a nonnegative weakly irreducible tensor $\mathcal{A}$, Fan \cite{Fan24} also proved that $\textrm{am}(\lambda) \geq |\mathbb{V}_{\lambda}(\mathcal{A})|$, where $\lambda$ is an eigenvalue of $\mathcal{A}$ with $|\lambda|=\rho(\mathcal{A})$.

\begin{theorem} \cite{Fan24} \label{amrhovar}
Let $\mathcal{A}$ be a nonnegative weakly irreducible tensor of order $k$ and dimension $n$, and let $\lambda$ be an eigenvalue of $\mathcal{A}$ with modulus equal to the spectral radius $\rho$.
Then
$$
{\rm am}(\lambda, \mathcal{A}) \geq |\mathbb{V}_{\lambda}(\mathcal{A})|.
$$
\end{theorem}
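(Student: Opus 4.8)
The plan is to compute the characteristic polynomial through Macaulay's formula (Theorem~\ref{MacRes}) and to observe that, for the eigenvalue system, both the numerator and the denominator determinants are \emph{ordinary} characteristic polynomials of one nonnegative matrix and one of its principal submatrices. Fix $\lambda$ with $|\lambda|=\rho$ and set $f_i^{(\mu)}=\mu x_i^{k-1}-(\mathcal{A}\mathbf{x}^{k-1})_i$ for $i\in[n]$, each homogeneous of degree $k-1$, so that $\varphi_{\mathcal{A}}(\mu)=\textsc{Res}(f_1^{(\mu)},\dots,f_n^{(\mu)})$. Writing out the Macaulay matrix $M(\mu)$ of Theorem~\ref{MacRes} for this system, the leading term $\mu x_i^{k-1}$ of $f_i^{(\mu)}$ contributes the scalar $\mu$ to the diagonal entry indexed by $\mathbf{x}^{\alpha}$, while the remaining entries are the (nonpositive) coefficients coming from $-(\mathcal{A}\mathbf{x}^{k-1})_i$; hence $M(\mu)=\mu I-C$ for a \emph{nonnegative} constant matrix $C$, and likewise $M'(\mu)=\mu I-C'$, where $C'$ is the principal submatrix of $C$ indexed by the reduced monomials. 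Consequently
$$
\varphi_{\mathcal{A}}(\mu)=\frac{\det M(\mu)}{\det M'(\mu)}=\frac{\det(\mu I-C)}{\det(\mu I-C')},
$$
so that $\mathrm{Spec}(\mathcal{A})=\mathrm{Spec}(C)\setminus\mathrm{Spec}(C')$ as multisets, and ${\rm am}(\lambda,\mathcal{A})={\rm ord}_{\lambda}\det(\mu I-C)-{\rm ord}_{\lambda}\det(\mu I-C')$.

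For the numerator I would bound ${\rm ord}_{\lambda}\det(\mu I-C)$ below by the geometric multiplicity of $\lambda$ as an eigenvalue of the ordinary matrix $C$, which is exactly ${\rm nullity}(M(\lambda))$. Since $|\lambda|=\rho$ forces $\mathbb{V}_{\lambda}(\mathcal{A})$ to be finite, the ideal $I_{\lambda}=\langle f_1^{(\lambda)},\dots,f_n^{(\lambda)}\rangle$ is zero-dimensional, and the rank--nullity identity~\eqref{nullHil} together with Theorem~\ref{Hilzero} gives ${\rm nullity}(M(\lambda))=HP_{I_{\lambda}}(d)=\sum_{p\in\mathbb{V}_{\lambda}(\mathcal{A})}m(p)\geq|\mathbb{V}_{\lambda}(\mathcal{A})|$, each local multiplicity being at least $1$. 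Thus ${\rm ord}_{\lambda}\det(\mu I-C)\geq|\mathbb{V}_{\lambda}(\mathcal{A})|$, and it remains only to show that the denominator does not vanish at $\lambda$.

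The hard part, and the only place where $|\lambda|=\rho$ is truly used, is to prove $\det M'(\lambda)=\det(\lambda I-C')\neq0$, i.e.\ that $\lambda$ is not an eigenvalue of $C'$. Here I would argue that $C$ is an \emph{irreducible} nonnegative matrix: in the digraph on degree-$d$ monomials where $\mathbf{x}^{\alpha}\to\mathbf{x}^{\beta}$ whenever $\mathbf{x}^{\beta}$ occurs in $(\mathbf{x}^{\alpha}/x_i^{k-1})\,(\mathcal{A}\mathbf{x}^{k-1})_i$ (for the unique $i$ with $x_i^{k-1}\mid\mathbf{x}^{\alpha}$), a single step replaces a factor $x_i^{k-1}$ by some $x_{i_2}\cdots x_{i_k}$ with $a_{ii_2\cdots i_k}>0$, and the strong connectivity of $D(\mathcal{A})$ (weak irreducibility of $\mathcal{A}$) lets one route any monomial to any other. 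Granting this, $C'$ is a \emph{proper} principal submatrix of the irreducible nonnegative matrix $C$ (proper as soon as not every degree-$d$ monomial is reduced, which holds for $n\geq3$; for $n\leq2$ every degree-$d$ monomial is reduced and $\det M'$ is a nonzero constant), so $\rho(C')<\rho(C)$ by Perron--Frobenius. Combined with $\rho(C)=\max\{\rho(\mathcal{A}),\rho(C')\}=\max\{\rho,\rho(C')\}$, read off from the spectral splitting above, this forces $\rho(C')<\rho$; hence every eigenvalue of $C'$ has modulus $<\rho=|\lambda|$ and $\det(\lambda I-C')\neq0$.

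Putting everything together, ${\rm ord}_{\lambda}\det M'(\mu)=0$, so ${\rm am}(\lambda,\mathcal{A})={\rm ord}_{\lambda}\det(\mu I-C)\geq|\mathbb{V}_{\lambda}(\mathcal{A})|$, as required. I expect the main obstacle to be the irreducibility of $C$ in the third paragraph: translating the strong connectivity of $D(\mathcal{A})$ into strong connectivity of the induced digraph on degree-$d$ monomials needs a careful combinatorial routing argument (with a little care at the boundary cases $n\leq2$), whereas the numerator bound and the spectral splitting are then routine consequences of the Hilbert-function computation and of Perron--Frobenius theory for nonnegative matrices.
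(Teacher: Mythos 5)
Your reduction to the Macaulay matrix is set up correctly: $M(\mu)=\mu I-C$ with $C\geq 0$, the identity $\varphi_{\mathcal{A}}(\mu)\det(\mu I'-C')=\det(\mu I-C)$ holds as polynomials in $\mu$ because Macaulay's relation holds universally and hence under specialization, and the numerator bound ${\rm ord}_{\lambda}\det(\mu I-C)\geq {\rm nullity}(\lambda I-C)=\sum_{p}m(p)\geq|\mathbb{V}_{\lambda}(\mathcal{A})|$ follows from \eqref{nullHil} and Theorem~\ref{Hilzero} --- granted the finiteness of $\mathbb{V}_{\lambda}(\mathcal{A})$ for $|\lambda|=\rho$, which is not automatic and needs the dimension result of \cite{FHB22}. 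The proof collapses, however, at exactly the step you flag as the hard part: the matrix $C$ is \emph{not} irreducible; indeed it is reducible in essentially every case of interest. Take $\mathcal{A}=\mathcal{A}(H)$ with $k\geq 3$, $n\geq 3$, say the single edge $\{1,2,3\}$ with $k=n=3$, $d=4$. The column of $C$ indexed by $x_{1}^{4}$ is identically zero: $x_{1}^{4}$ would have to equal $(\mathbf{x}^{\alpha}/x_{i}^{k-1})\,x_{i_{2}}\cdots x_{i_{k}}$ for some nonzero $a_{ii_{2}\cdots i_{k}}$, forcing every $i_{j}\in\{1,i\}$; but for an adjacency tensor the indices of a nonzero entry are $k$ pairwise distinct vertices none equal to $i$, so this is impossible. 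Hence each pure power $x_{j}^{d}$ has no in-neighbour in the digraph of $C$, that digraph is not strongly connected, and the Perron--Frobenius strict inequality $\rho(C')<\rho(C)$ for a proper principal submatrix is unavailable.

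Without $\rho(C')<\rho$ you cannot conclude $\det(\lambda I'-C')\neq 0$, and the identity ${\rm am}(\lambda,\mathcal{A})={\rm ord}_{\lambda}\det(\mu I-C)-{\rm ord}_{\lambda}\det(\mu I'-C')$ then yields no lower bound at all. Note that this is precisely the difficulty the present paper does not attempt to resolve head-on: Theorem~\ref{amrhovar} is imported from \cite{Fan24} without proof, and in the proofs of Theorems~\ref{weakirreq} and~\ref{eqzero} the vanishing of ${\rm am}(\lambda,M')$ is obtained \emph{a posteriori}, by playing the stochastic-matrix identity ${\rm am}(\lambda,M)={\rm nullity}(M(\lambda))=|\mathbb{V}_{\lambda}|$ against the already-known lower bound of Theorem~\ref{amrhovar}, rather than by any direct analysis of $M'$. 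To rescue your route you would need an independent argument that every strongly connected class of $C$ attaining spectral radius $\geq\rho$ consists of reduced monomials (so that $\rho(C')<\rho$ still holds despite reducibility); nothing in your outline supplies this, so the argument as written has a genuine gap.
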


Recall that a $k$-uniform hypergraph $H$ is connected if and only if $\mathcal{A}(H)$ is a weakly irreducible tensor. In particular, by considering the adjacency tensor of $H$, we derive the following result.

\begin{theorem} \label{weakirreq}
Let $H$ be a connected $k$-uniform hypergraph, and let $\lambda$ be an eigenvalue of $\mathcal{A}(H)$ with modulus equal to the spectral radius $\rho$.
Then
$$
{\rm am}(\lambda, \mathcal{A}(H)) = |\mathbb{V}_{\lambda}(\mathcal{A}(H))|.
$$
\end{theorem}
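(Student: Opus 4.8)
\subsection*{Proof proposal}

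The plan is to reduce to a stochastic tensor and then read off both $\mathrm{am}(\lambda,\mathcal A(H))$ and $|\mathbb V_\lambda(\mathcal A(H))|$ from the Macaulay matrix of the eigen-system, exploiting the fact that this matrix becomes an ordinary stochastic matrix. Since $\mathcal A(H)$ is nonnegative and weakly irreducible ($H$ being connected), Theorem~\ref{YYweakly} furnishes a positive diagonal matrix $U$ and a stochastic weakly irreducible tensor $\mathcal B$ with $\rho\mathcal B = U^{-(k-1)}\mathcal A(H)U$. Diagonal similarity preserves the spectrum, hence all algebraic multiplicities \cite{Shao13}, and the substitution $\mathbf x\mapsto U^{-1}\mathbf x$ is a projective isomorphism carrying $\mathbb V_\mu(\mathcal A(H))$ onto $\mathbb V_\mu(\rho\mathcal B)$ while preserving point multiplicities. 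Rescaling by $\rho$ then lets me assume $\mathcal B$ is stochastic with $\rho(\mathcal B)=1$ and replaces $\lambda$ by $\lambda'=\lambda/\rho$ with $|\lambda'|=1$; it thus suffices to prove $\mathrm{am}(\lambda',\mathcal B)=|\mathbb V_{\lambda'}(\mathcal B)|$.

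Next I form the Macaulay matrix of the system $F_i=\mu x_i^{k-1}-(\mathcal B\mathbf x^{k-1})_i$ ($i\in[n]$), each homogeneous of degree $k-1$, at the critical degree $d=n(k-2)+1$. Because the only $\mu$-dependent term of $F_i$ is $\mu x_i^{k-1}$, for a row indexed by $\mathbf x^\alpha\in S_i$ the product $\mathbf x^\alpha/x_i^{k-1}\cdot F_i$ contributes $\mu$ to the diagonal entry $(\alpha,\alpha)$ alone, so the Macaulay matrix takes the form $M(\mu)=\mu I-N$, where $N$ records the coefficients coming from $(\mathcal B\mathbf x^{k-1})_i$. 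The key observation is that $N$ is a \emph{stochastic matrix}: its entries are coefficients of the nonnegative, degree-$d$ polynomials $\mathbf x^\alpha/x_i^{k-1}\cdot(\mathcal B\mathbf x^{k-1})_i$, all of whose monomials already lie in the column index set $S$, and evaluating such a polynomial at $\mathbf 1$ gives $(\mathcal B\mathbf 1^{k-1})_i=1$; hence $N\ge 0$ and every row of $N$ sums to $1$. Consequently $\det M(\mu)=\chi_N(\mu)$ and, writing $N'$ for the principal submatrix of $N$ on the non-reduced monomials, $\det M'(\mu)=\chi_{N'}(\mu)$, so Theorem~\ref{MacRes} yields the polynomial identity $\chi_N(\mu)=\varphi_{\mathcal B}(\mu)\,\chi_{N'}(\mu)$. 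Comparing orders of vanishing at $\mu=\lambda'$ gives $a_N(\lambda')=\mathrm{am}(\lambda',\mathcal B)+a_{N'}(\lambda')$, where $a_N,a_{N'}$ denote the algebraic multiplicities of $\lambda'$ as an eigenvalue of the matrices $N,N'$.

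It remains to evaluate $a_N(\lambda')$. On one hand, $\dim\ker(\lambda' I-N)=\mathrm{nullity}\,M(\lambda')$, which by \eqref{nullHil} equals $HP_I(d)$ for $I=\langle F_1,\dots,F_n\rangle$ with $\mu$ set to $\lambda'$; since, by Theorem~\ref{multirho}, $\mathbb V_{\lambda'}(\mathcal B)$ is a finite set of points each of multiplicity $m(p)=1$, Theorem~\ref{Hilzero} gives $HP_I(d)=|\mathbb V_{\lambda'}(\mathcal B)|$, so the geometric multiplicity of $\lambda'$ in $N$ is exactly $|\mathbb V_{\lambda'}(\mathcal B)|$. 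On the other hand, $N$ is stochastic and $|\lambda'|=1=\rho(N)$, so $\lambda'$ is a semisimple eigenvalue of $N$, whence $a_N(\lambda')=\dim\ker(\lambda' I-N)=|\mathbb V_{\lambda'}(\mathcal B)|$. Substituting into the identity above gives $\mathrm{am}(\lambda',\mathcal B)=|\mathbb V_{\lambda'}(\mathcal B)|-a_{N'}(\lambda')\le|\mathbb V_{\lambda'}(\mathcal B)|$; since Theorem~\ref{amrhovar} supplies the reverse inequality $\mathrm{am}(\lambda',\mathcal B)\ge|\mathbb V_{\lambda'}(\mathcal B)|$, I conclude $a_{N'}(\lambda')=0$ and $\mathrm{am}(\lambda',\mathcal B)=|\mathbb V_{\lambda'}(\mathcal B)|$. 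Undoing the two reductions proves the theorem.

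The step I expect to be the main obstacle is the semisimplicity of $\lambda'$ as an eigenvalue of the stochastic matrix $N$. Corollary~\ref{stoalge} supplies exactly this when $\lambda'=1$ (the case $\lambda=\rho$ at the heart of Conjecture~\ref{conj1}), but the theorem allows any $\lambda'$ on the unit circle. For these I would argue by power-boundedness: since $N\ge 0$ and $N\mathbf 1=\mathbf 1$, every power $N^m$ is again stochastic, so $\|N^m\|_\infty=1$ for all $m$; a unit-modulus eigenvalue carrying a Jordan block of size $\ge 2$ would force $\|N^m\|$ to grow without bound, a contradiction. Thus every eigenvalue of modulus $1$ is semisimple, extending Corollary~\ref{stoalge}. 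A secondary technical point deserving care is the exact equality $M(\mu)=\mu I-N$ together with the stochasticity of $N$, i.e. that all monomials occurring in each row lie in the column index set $S$; this is automatic because those rows are homogeneous of degree exactly $d$.
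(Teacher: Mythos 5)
Your proposal is correct and follows essentially the same route as the paper: reduce to a weakly irreducible stochastic tensor via Theorem~\ref{YYweakly}, recognize the Macaulay matrix of the eigen-system as $\mu I - N$ with $N$ stochastic, extract the geometric multiplicity from \eqref{nullHil}, Theorem~\ref{Hilzero} and Theorem~\ref{multirho}, and close the gap with the reverse inequality of Theorem~\ref{amrhovar}. The only divergence is that the paper first reduces a general peripheral eigenvalue $\lambda$ to $\rho$ itself by citing \cite[Theorem 4.2]{Fan24}, so that only Corollary~\ref{stoalge} (semisimplicity of the eigenvalue $1$ of a stochastic matrix) is required, whereas you treat $\lambda'$ on the unit circle directly and correctly extend semisimplicity to all unit-modulus eigenvalues of a stochastic matrix via power-boundedness.
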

\begin{proof}
As is shown in \cite[Theorem 4.2]{Fan24}, we have ${\rm am}(\lambda, \mathcal{A}(H)) = {\rm am}(\rho, \mathcal{A}(H))$ and $|\mathbb{V}_{\lambda}(\mathcal{A}(H))| = |\mathbb{V}_{\rho}(\mathcal{A}(H))|$. So it suffices to prove that ${\rm am}(\rho, \mathcal{A}(H)) = |\mathbb{V}_{\rho}(\mathcal{A}(H))|$ for the eigenvalue $\rho$.

By Theorem \ref{YYweakly}, $\mathcal{A}$ is diagonally similar a weakly irreducible stochastic tensor. Let $\rho \mathcal{B} = U^{-(k-1)} \mathcal{A}(H) U$, where $U$ is an invertible diagonal matrix. It is easy to see that $\mathbf{x} \in \mathbb{V}_{\rho}(\mathcal{A}(H))$ if only if $U^{-1} \mathbf{x} \in \mathbb{V}_{1}(\mathcal{B})$. Thus we have
\begin{equation} \label{eqrela}
{\rm am}(\rho, \mathcal{A}(H)) = {\rm am}(1, \mathcal{B}) \quad {\rm and} \quad |\mathbb{V}_{\rho}(\mathcal{A}(H))| = |\mathbb{V}_{1}(\mathcal{B})|.
\end{equation}
Moreover, let $\mathbf{x} = U \mathbf{y}$. Then
$$
(\rho \mathcal{I} - \mathcal{A}(H)) \mathbf{x}^{k-1} = (\rho \mathcal{I} - \mathcal{A}(H)) U \mathbf{y}^{k-1},
$$
which implies that
$$
U^{-(k-1)} (\rho \mathcal{I} - \mathcal{A}(H)) \mathbf{x}^{k-1} = U^{-(k-1)} (\rho \mathcal{I} - \mathcal{A}(H)) U \mathbf{y}^{k-1}.
$$
Note that $U^{-(k-1)} \mathcal{I} U = \mathcal{I}$, then we have
$$
U^{-(k-1)} (\rho \mathcal{I} - \mathcal{A}(H)) \mathbf{x}^{k-1} = \rho (\mathcal{I} - \mathcal{B}) \mathbf{y}^{k-1}.
$$
In fact, the expression $U^{-(k-1)} (\rho \mathcal{I} - \mathcal{A}(H)) \mathbf{x}^{k-1}$ is obtained simply by multiplying the $i$-th homogeneous polynomial of $(\rho \mathcal{I} - \mathcal{A}(H)) \mathbf{x}^{k-1}$ by $u_{ii}^{-(k-1)}$, for each $i \in [n]$. By Theorem \ref{multirho}, we know that each point $\mathbf{x} \in \mathbb{V}_{\rho}(\mathcal{A}(H))$ has multiplicity $1$. It follows that each point $\mathbf{y} \in \mathbb{V}_{1}(\mathcal{B})$ also has multiplicity $1$.

Let $F_{i}(x_{1}, \ldots, x_{n}) = \left(\mathcal{B} \mathbf{x}^{k-1}\right)_{i}$ for all $i \in [n]$. Then the characteristic polynomial of $\mathcal{B}$ is the resultant $\textsc{Res}(\lambda x_{1}^{k-1} - F_{1}, \ldots, \lambda x_{n}^{k-1} - F_{n})$. According to the Macaulay algorithm and the notation therein, construct the matrix $M$ for the homogeneous polynomials $F_{1}, \ldots, F_{n}$. By Theorem \ref{MacRes}, we have
\begin{equation} \label{charpol}
\textsc{Res}(\lambda x_{1}^{k-1} - F_{1}, \ldots, \lambda x_{n}^{k-1} - F_{n}) = \frac{\det(\lambda I - M)}{\det(\lambda I' - M')},
\end{equation}
where $M$ and $M'$ are the matrices in the Macaulay algorithm for the homogeneous polynomials $\mathcal{B} \mathbf{x}^{k-1}$, and $I$ and $I'$ are the identity matrices of their respective dimensions.

Note that $\mathcal{B}$ is a weakly irreducible stochastic tensor, which implies that $M$ is a stochastic matrix. Then by Corollary \ref{stoalge}, the algebraic and geometric multiplicities of the eigenvalue $1$ of $M$ are equal, i.e.,
$$
{\rm am}(1, M) = {\rm nullity}(I - M).
$$
By Theorem \ref{Hilzero}, \eqref{nullHil} and the fact that each point in $\mathbb{V}_{1}(\mathcal{B})$ has multiplicity $1$, we obtain that
$$
{\rm nullity}(I - M) = |\mathbb{V}_{1}(\mathcal{B})|.
$$
Thus we have
$$
{\rm am}(1, M) = |\mathbb{V}_{1}(\mathcal{B})|.
$$

On the other hand, according to \eqref{charpol}, we have
$$
\begin{aligned}
{\rm am}(1, \mathcal{B}) =& {\rm am}(1, M) - {\rm am}(1, M') \\
=&|\mathbb{V}_{1}(\mathcal{B})| - {\rm am}(1, M').
\end{aligned}
$$
Combined with Theorem \ref{amrhovar}, we have ${\rm am}(1, \mathcal{B}) \geq |\mathbb{V}_{1}(\mathcal{B})|$. It follows that
$$
{\rm am}(1, \mathcal{B}) = |\mathbb{V}_{1}(\mathcal{B})|.
$$
Then \eqref{eqrela} implies that ${\rm am}(\rho, \mathcal{A}(H)) = |\mathbb{V}_{\rho}(\mathcal{A}(H))|$. This completes the proof.
\end{proof}

\begin{corollary} \label{amrhoeq}
Let $H$ be a connected $k$-uniform hypergraph with spectral radius $\rho$. Then
$$
{\rm am}(\rho, \mathcal{A}(H)) = |\mathbb{V}_{\rho}(\mathcal{A}(H))|.$$
\end{corollary}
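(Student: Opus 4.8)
The plan is to obtain this corollary as an immediate specialization of Theorem \ref{weakirreq}. Since $H$ is connected, its adjacency tensor $\mathcal{A}(H)$ is nonnegative and weakly irreducible, so the Perron--Frobenius theory for such tensors \cite{FGH13, YY11} guarantees that the spectral radius $\rho$ is itself an eigenvalue of $\mathcal{A}(H)$ (indeed, the one carrying the unique positive eigenvector). Moreover, $\rho$ is real and nonnegative, so trivially $|\rho| = \rho(\mathcal{A}(H))$.

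Consequently, $\rho$ is an eigenvalue of $\mathcal{A}(H)$ whose modulus equals the spectral radius, which is exactly the hypothesis of Theorem \ref{weakirreq}. Applying that theorem with the choice $\lambda = \rho$ then yields
$$
{\rm am}(\rho, \mathcal{A}(H)) = |\mathbb{V}_{\rho}(\mathcal{A}(H))|,
$$
which is the assertion of the corollary.

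I expect essentially no obstacle at this stage, since all of the substantive work has already been carried out in the proof of Theorem \ref{weakirreq} --- namely the passage to the diagonally similar stochastic tensor $\mathcal{B}$ via Theorem \ref{YYweakly}, the identification of $M$ as a stochastic matrix, the use of Corollary \ref{stoalge} to equate the algebraic and geometric multiplicities of the eigenvalue $1$, and the Hilbert-function count via \eqref{nullHil} together with the simplicity of each point of $\mathbb{V}_{\rho}$ furnished by Theorem \ref{multirho}. The only point one must verify here is that $\rho$ genuinely qualifies as an eigenvalue ``with modulus equal to the spectral radius,'' and this is precisely the Perron--Frobenius property invoked above. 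Thus the corollary records the special case $\lambda = \rho$ of the more general statement, and simultaneously confirms Conjecture \ref{conj1}.
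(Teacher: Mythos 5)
Your proposal is correct and matches the paper exactly: the corollary is the special case $\lambda = \rho$ of Theorem \ref{weakirreq}, justified by the Perron--Frobenius fact that $\rho$ is itself an eigenvalue of the nonnegative weakly irreducible tensor $\mathcal{A}(H)$. The paper likewise treats this as an immediate consequence with no further argument needed.
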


In particular, we prove that for a connected $k$-uniform hypergraph $H$, $\rho(\mathcal{A}(H))$ is algebraically simple if and only if $|\mathbb{V}_{\rho}(\mathcal{A}(H))| = 1$. That is, there is only one eigenvector in $\mathbb{V}_{\rho}(\mathcal{A}(H))$, namely the Perron vector of $\mathcal{A}(H)$.

\subsection{Proof of Conjecture \ref{conj2}}

Let $H$ be a connected uniform hypergraph, as is shown in \cite{Qi14} that zero is the smallest H-eigenvalue of $\mathcal{L}(H)$. The eigenvectors corresponding to the zero eigenvalues of $\mathcal{L}(H)$ are studied in \cite{FWBWLZ19, HQ14}, with the results showing that the number of such eigenvectors is finite. Fan \cite{Fan24} also investigated the multiplicity of the eigenvectors in the projective variety $\mathbb{V}_{0}(\mathcal{L}(H))$.
\begin{theorem} [\cite{Fan24}] \label{mulpoi}
Let $H$ be a connected $k$-uniform hypergraph. Then each point $p \in \mathbb{V}_{0}(\mathcal{L}(H))$ has multiplicity $1$.
\end{theorem}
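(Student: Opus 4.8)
The plan is to prove multiplicity $1$ by computing the Jacobian of the defining system at each point of $\mathbb{V}_0(\mathcal{L}(H))$ and showing it attains the maximal possible rank. Write $G_i(\mathbf{x}) = (\mathcal{L}(H)\mathbf{x}^{k-1})_i = d_i x_i^{k-1} - \sum_{e \ni i}\prod_{u \in e\setminus\{i\}} x_u$ for $i \in [n]$; these are homogeneous of degree $k-1$ and cut out $\mathbb{V}_0(\mathcal{L}(H))$ in $\mathbb{P}^{n-1}$. By Euler's identity the Jacobian $J(p) = (\partial G_i/\partial x_j)(p)$ satisfies $J(p)\,p = \mathbf{0}$ at every $p \in \mathbb{V}_0(\mathcal{L}(H))$, so $\mathrm{rank}\,J(p) \le n-1$ always. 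The point $p$ is a reduced, hence multiplicity-one, point of the zero-dimensional scheme $\mathcal{V}(G_1,\ldots,G_n)$ precisely when $\ker J(p)$ is exactly the line $\langle p\rangle$, i.e. $\mathrm{rank}\,J(p)=n-1$. Thus the theorem reduces to establishing this rank at every zero-eigenvector.

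First I would pin down the shape of the points of $\mathbb{V}_0(\mathcal{L}(H))$. Fixing a representative $\mathbf{x}$ and letting $M=\max_i|x_i|$ be attained at $i_0$, the relation $d_{i_0}M^{k-1}=|d_{i_0}x_{i_0}^{k-1}|=\bigl|\sum_{e\ni i_0}\prod_{u\in e\setminus\{i_0\}}x_u\bigr|\le d_{i_0}M^{k-1}$ forces equality in the triangle inequality, so $|x_u|=M$ for every neighbour $u$ of $i_0$ and, by connectivity of $H$, for all vertices. After rescaling we may take $M=1$, and then running the same equality at \emph{every} vertex $i$ forces each reduced product to align, giving $\prod_{u\in e\setminus\{i\}}x_u=x_i^{k-1}$, equivalently $\prod_{u\in e}x_u=x_i^{k}$, for every edge $e$ and every $i\in e$. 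Hence all the quantities $x_i^{k}$ agree inside each edge, and connectivity makes them globally equal; rescaling the representative once more, we may assume $x_i^{k}=1$ for all $i$ and $\prod_{u\in e}x_u=1$ for every edge $e$. In particular each coordinate is a $k$-th root of unity and $\mathbb{V}_0(\mathcal{L}(H))$ is finite, recovering the $\mathbb{Z}_k$-incidence description underlying the Smith-normal-form count.

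With this normalization the Jacobian factors cleanly. Using $x_i^{k}=1$ we have $x_i^{k-1}=x_i^{-1}$ and $x_i^{k-2}=x_i^{-2}$, and for every edge $e\ni i,j$ the reduced product is $\prod_{u\in e\setminus\{i,j\}}x_u=(x_ix_j)^{-1}$. Hence $J(p)_{ii}=(k-1)d_i x_i^{-2}$ and $J(p)_{ij}=-a_{ij}x_i^{-1}x_j^{-1}$ for $i\ne j$, where $a_{ij}=|\{e\in E(H):i,j\in e\}|$ is the codegree. Writing $D=\mathrm{diag}(x_1^{-1},\ldots,x_n^{-1})$, this reads $J(p)=D\,L\,D$, where $L$ is the symmetric matrix with $L_{ii}=(k-1)d_i$ and $L_{ij}=-a_{ij}$. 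Since $\sum_{j\ne i}a_{ij}=\sum_{e\ni i}(k-1)=(k-1)d_i$, every row of $L$ sums to zero, so $L$ is the Laplacian matrix of the weighted graph on $[n]$ with edge weights $a_{ij}$. This graph is connected because $H$ is, so $L$ has rank $n-1$ with kernel $\langle\mathbf{1}\rangle$; as $D$ is invertible, $\mathrm{rank}\,J(p)=n-1$ and $\ker J(p)=D^{-1}\langle\mathbf{1}\rangle=\langle p\rangle$, yielding multiplicity $1$ at every $p$.

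The step I expect to require the most care is the bridge between the localization definition of $m(p)$ used in the paper and the corank-one criterion $\mathrm{rank}\,J(p)=n-1$. I would justify it by passing to an affine chart $x_\ell=1$ with $p_\ell\ne 0$, where one equation becomes redundant by Euler and the remaining $n-1$ equations exhibit $p$ as an isolated zero whose local ring equals $\mathbb{C}$ exactly when the corresponding affine Jacobian is nonsingular, which is equivalent to the corank-one condition above. The modulus-equalization argument is otherwise routine, but it must be run carefully so that the single triangle-inequality equality delivers both the equal-modulus conclusion and the phase-alignment (edge-product) conclusion that make the Jacobian factorization possible.
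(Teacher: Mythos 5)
The paper does not actually prove this statement: Theorem \ref{mulpoi} is imported verbatim from \cite{Fan24} with no argument given, so there is no in-paper proof to compare against. Judged on its own, your proof is correct and complete. The modulus-rigidity step is sound: maximality plus the triangle equality propagates $|x_i|=M$ along edges and, by iterating at each newly identified maximizer, to all of $H$; the phase-alignment then correctly yields $\prod_{u\in e\setminus\{i\}}x_u=x_i^{k-1}$, hence (after the two rescalings) $x_i^k=1$ and $\prod_{u\in e}x_u=1$, recovering the known description of $\mathbb{V}_0(\mathcal{L}(H))$ from \cite{FWBWLZ19, HQ14} and in particular its finiteness, which is needed before speaking of a zero-dimensional scheme. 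The Jacobian computation checks out: $J(p)=D\,L\,D$ with $L$ the Laplacian of the codegree-weighted graph on $[n]$, which is connected exactly because $H$ is, so $\operatorname{rank}J(p)=n-1$ and $\ker J(p)=\langle p\rangle$ (the latter also forced by Euler's relation once the rank is $n-1$). Your bridge from corank one to $m(p)=1$ via dehomogenizing at a chart $x_\ell=1$ (legitimate since every coordinate of $p$ is a root of unity, hence nonzero) is the right way to reconcile the affine localization definition of multiplicity in Section 2.2 with the projective count in Theorem \ref{Hilzero}: full column rank of the $n\times(n-1)$ affine Jacobian kills the Zariski cotangent space of the local ring, so the local ring is $\mathbb{C}$. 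This is essentially the argument one expects behind the cited result, so nothing is lost by giving it explicitly; if anything, including it would make the present paper more self-contained.
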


A real tensor $\mathcal{A}$ is called a Z-tensor if all of its off-diagonal entries are non-positive, or equivalently, it can be written as
$$
\mathcal{A} = s \mathcal{I} - \mathcal{B},
$$
where $s > 0$ and $\mathcal{B}$ is nonnegative. The least H-eigenvalue of $\mathcal{A}$ is exactly $s - \rho(\mathcal{B})$. Fan \cite{Fan24} gave a lower bound for the algebraic multiplicity of the smallest H-eigenvalue of a weakly irreducible Z-tensor.

\begin{theorem} [\cite{Fan24}] \label{amineqZ}
Let $\mathcal{A}$ be a weakly irreducible Z-tensor, and let $\lambda$ be the smallest H-eigenvalue of $\mathcal{A}$.
Then
$$
{\rm am}(\lambda, \mathcal{A}) \geq |\mathbb{V}_{\lambda}(\mathcal{A})|.
$$
\end{theorem}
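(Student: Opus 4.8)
The plan is to reduce the claim to the nonnegative case already recorded in Theorem~\ref{amrhovar}. Since $\mathcal{A}$ is a Z-tensor, I write $\mathcal{A} = s\mathcal{I} - \mathcal{B}$ with $s > 0$ and $\mathcal{B}$ nonnegative, as in the definition preceding the statement. The first observation is that $\mathcal{B}$ is again weakly irreducible: the tensors $\mathcal{A}$ and $\mathcal{B}$ differ only in their diagonal entries $a_{i i \cdots i}$ and $b_{i i \cdots i}$, while $s\mathcal{I}$ contributes nothing off the diagonal, so the off-diagonal supports of $\mathcal{A}$ and $\mathcal{B}$ coincide. Hence $D(\mathcal{A})$ and $D(\mathcal{B})$ share the same arcs between distinct vertices and are simultaneously strongly connected. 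Consequently $\mathcal{B}$ is a nonnegative weakly irreducible tensor, and $\rho := \rho(\mathcal{B})$ is an eigenvalue of $\mathcal{B}$ carrying a positive Perron eigenvector. Because every H-eigenvalue $\mu$ of $\mathcal{B}$ satisfies $\mu \le \rho$, the least H-eigenvalue of $\mathcal{A}$ is exactly $\lambda = s - \rho$.

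Next I would establish the two dictionary equalities
$$
\mathbb{V}_{\lambda}(\mathcal{A}) = \mathbb{V}_{\rho}(\mathcal{B}) \quad {\rm and} \quad {\rm am}(\lambda, \mathcal{A}) = {\rm am}(\rho, \mathcal{B}).
$$
The first is immediate from $\mathcal{A}\mathbf{x}^{k-1} = s\mathbf{x}^{[k-1]} - \mathcal{B}\mathbf{x}^{k-1}$: the equation $\mathcal{A}\mathbf{x}^{k-1} = \lambda \mathbf{x}^{[k-1]}$ is equivalent to $\mathcal{B}\mathbf{x}^{k-1} = (s-\lambda)\mathbf{x}^{[k-1]} = \rho\,\mathbf{x}^{[k-1]}$, so the two projective varieties are literally the same subset of $\mathbb{P}^{n-1}$. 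For the second equality I would compare characteristic polynomials through the resultant. Writing $\eta \mathbf{x}^{[k-1]} - \mathcal{A}\mathbf{x}^{k-1} = -\bigl[(s-\eta)\mathbf{x}^{[k-1]} - \mathcal{B}\mathbf{x}^{k-1}\bigr]$ and using that the resultant is homogeneous in the coefficients of each of its $n$ argument polynomials, negating all $n$ components multiplies the resultant only by a nonzero constant. Hence $\varphi_{\mathcal{A}}(\eta) = c\,\varphi_{\mathcal{B}}(s-\eta)$ for some nonzero $c \in \mathbb{C}$. Since $\mu = s - \eta$ is an affine (degree-one) change of variable, it preserves the order of vanishing at every point, so the multiplicity of $\eta = \lambda$ as a root of $\varphi_{\mathcal{A}}$ equals the multiplicity of $\mu = s - \lambda = \rho$ as a root of $\varphi_{\mathcal{B}}$, yielding ${\rm am}(\lambda, \mathcal{A}) = {\rm am}(\rho, \mathcal{B})$.

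Finally, $\mathcal{B}$ is nonnegative and weakly irreducible and the eigenvalue $\rho = \rho(\mathcal{B})$ has modulus equal to the spectral radius, so Theorem~\ref{amrhovar} applies and gives ${\rm am}(\rho, \mathcal{B}) \ge |\mathbb{V}_{\rho}(\mathcal{B})|$. Combining this with the two dictionary equalities produces
$$
{\rm am}(\lambda, \mathcal{A}) = {\rm am}(\rho, \mathcal{B}) \ge |\mathbb{V}_{\rho}(\mathcal{B})| = |\mathbb{V}_{\lambda}(\mathcal{A})|,
$$
which is the desired inequality. I expect the \emph{main obstacle} to be the second dictionary equality, namely justifying that the algebraic multiplicity transfers verbatim under the spectral shift $\mathcal{A} = s\mathcal{I} - \mathcal{B}$. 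The delicate point is not the affine substitution itself but confirming that passing from the full system to its negative — and, more generally, the behaviour of the tensor resultant under this shift — alters the characteristic polynomial only by a nonzero multiplicative constant, so that no spurious roots or multiplicity shifts are introduced and the reduction to Theorem~\ref{amrhovar} is clean.
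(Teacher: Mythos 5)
Your proposal is correct. Note that the paper does not prove this statement itself --- it is quoted from \cite{Fan24} as Theorem~\ref{amineqZ} --- but your reduction is exactly the standard (and, in Fan's paper, the actual) route: shift $\mathcal{A}=s\mathcal{I}-\mathcal{B}$, observe that weak irreducibility and the eigenvector varieties transfer verbatim, use the homogeneity of the resultant in the coefficients of each argument (degree $\prod_{j\neq i}d_j$ in the $i$-th slot, so negating all $n$ polynomials contributes only the sign $(-1)^{n(k-1)^{n-1}}$) to get $\varphi_{\mathcal{A}}(\eta)=\pm\,\varphi_{\mathcal{B}}(s-\eta)$, and invoke Theorem~\ref{amrhovar} for the nonnegative weakly irreducible tensor $\mathcal{B}$ at $\rho(\mathcal{B})$. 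The step you single out as the main obstacle is indeed the only point needing care, and your homogeneity argument settles it.
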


For a connected $k$-uniform hypergraph $H$, it is easy to see that $\mathcal{L}(H)$ is a weakly irreducible Z-tensor. Note that zero is the smallest H-eigenvalue of $\mathcal{L}(H)$, then we have the following result.

\begin{corollary} \label{leqzero}
Let $H$ be a connected $k$-uniform hypergraph, then we have
$$
{\rm am}(0, \mathcal{L}(H)) \geq |\mathbb{V}_{0}(\mathcal{L}(H))|.
$$
\end{corollary}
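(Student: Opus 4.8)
The plan is to recognize that Corollary \ref{leqzero} is an immediate specialization of Theorem \ref{amineqZ}, so the only work is to verify that $\mathcal{L}(H)$ satisfies the hypotheses of that theorem: it must be a weakly irreducible Z-tensor whose smallest H-eigenvalue is $0$.

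First I would check that $\mathcal{L}(H)$ is a Z-tensor. Writing $\mathcal{L}(H) = \mathcal{D}(H) - \mathcal{A}(H)$, every off-diagonal entry of $\mathcal{L}(H)$ equals $-a_{i_1 \cdots i_k} \le 0$ because $\mathcal{A}(H)$ is nonnegative, while the only nonzero diagonal entries are the degrees $d_i \ge 0$. Hence all off-diagonal entries are non-positive, which is precisely the defining property of a Z-tensor. To exhibit the required form $s\mathcal{I} - \mathcal{B}$ with $\mathcal{B}$ nonnegative, I would set $s = \Delta := \max_{i} d_i$ and $\mathcal{B} = \Delta \mathcal{I} - \mathcal{L}(H) = (\Delta \mathcal{I} - \mathcal{D}(H)) + \mathcal{A}(H)$; its diagonal entries are $\Delta - d_i \ge 0$ and its off-diagonal entries coincide with those of $\mathcal{A}(H)$, so $\mathcal{B} \ge 0$ and $s > 0$.

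Next, since $H$ is connected, $\mathcal{L}(H)$ is weakly irreducible: this is recorded in the preliminaries, where it is noted that $\mathcal{A}(H)$, $\mathcal{L}(H)$, and $\mathcal{Q}(H)$ are each weakly irreducible if and only if $H$ is connected. Concretely, the off-diagonal nonzero pattern of $\mathcal{L}(H)$ agrees with that of $\mathcal{A}(H)$, so the two associated digraphs $D(\mathcal{L}(H))$ and $D(\mathcal{A}(H))$ have the same arcs apart from self-loops and therefore the same strong connectivity. Finally, as recalled from \cite{Qi14}, zero is the smallest H-eigenvalue of $\mathcal{L}(H)$.

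With these three facts in hand, I would apply Theorem \ref{amineqZ} directly, taking $\mathcal{A} = \mathcal{L}(H)$ and $\lambda = 0$, which yields ${\rm am}(0, \mathcal{L}(H)) \ge |\mathbb{V}_0(\mathcal{L}(H))|$. I do not expect any genuine obstacle here: all of the analytic content is packaged inside Theorem \ref{amineqZ}, and the corollary reduces to the routine verification of its hypotheses. The only point requiring any care is confirming that the smallest H-eigenvalue is exactly $0$ (so that the theorem is applied at the correct eigenvalue), which is supplied by the cited result of \cite{Qi14}.
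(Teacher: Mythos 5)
Your proposal is correct and follows essentially the same route as the paper, which likewise obtains the corollary by observing that $\mathcal{L}(H)$ is a weakly irreducible Z-tensor whose smallest H-eigenvalue is zero (citing \cite{Qi14}) and then invoking Theorem \ref{amineqZ}. Your only difference is that you spell out the routine verification of the hypotheses in more detail than the paper does.
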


Fan \cite{Fan24} proved that ${\rm am}(0, \mathcal{L}(T)) = |\mathbb{V}_{0}(\mathcal{L}(T))|$ holds for a uniform hypertree $T$, and conjectured that this equality might extend to all connected uniform hypergraphs. We now prove that this is indeed the case.

\begin{theorem} \label{eqzero}
Let $H$ be a connected $k$-uniform hypergraph on $n$ vertices, then we have
$$
{\rm am}(0, \mathcal{L}(H)) = |\mathbb{V}_{0}(\mathcal{L}(H))|.
$$
\end{theorem}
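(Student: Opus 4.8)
The plan is to run the argument of Theorem \ref{weakirreq} in parallel, but with the stochastic tensor replaced by a suitable integer shift of the Macaulay matrix of $\mathcal{L}(H)$. Write
$$
F_i(x_1,\dots,x_n) = \bigl(\mathcal{L}(H)\mathbf{x}^{k-1}\bigr)_i = d_i x_i^{k-1} - \sum_{e\ni i}\ \prod_{j\in e\setminus\{i\}} x_j, \qquad i\in[n],
$$
and form the Macaulay matrices $M$ and $M'$ for the homogeneous system $F_1,\dots,F_n$. Since the Macaulay matrix depends linearly on the coefficients of the defining polynomials and equals the identity for the pure-power system $x_1^{k-1},\dots,x_n^{k-1}$, the characteristic polynomial of $\mathcal{L}(H)$ is $\varphi_{\mathcal{L}(H)}(\lambda) = \det(\lambda I - M)/\det(\lambda I' - M')$ by Theorem \ref{MacRes}, exactly as in \eqref{charpol}. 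Comparing orders of vanishing at $\lambda = 0$ gives ${\rm am}(0,\mathcal{L}(H)) = {\rm am}(0,M) - {\rm am}(0,M')$, and it suffices to compute ${\rm am}(0,M)$.

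First I would record three elementary properties of $M$. Fix a row indexed by a monomial $\mathbf{x}^\alpha\in S_i$; its entries are the coefficients of $\mathbf{x}^\alpha/x_i^{k-1}\cdot F_i$. The diagonal entry equals $d_i>0$, because a subtracted adjacency monomial can reproduce $\mathbf{x}^\alpha$ only if $\prod_{j\in e\setminus\{i\}}x_j = x_i^{k-1}$, which is impossible for a $k$-edge; the off-diagonal entries are nonpositive, as they arise solely from those subtracted monomials; and evaluating the homogeneous polynomial $\mathbf{x}^\alpha/x_i^{k-1}\cdot F_i$ at $\mathbf{x}=\mathbf{1}$ shows each row sum equals $F_i(\mathbf{1}) = (\mathcal{L}(H)\mathbf{1}^{k-1})_i = 0$. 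Thus $M$ has nonpositive off-diagonal entries, positive diagonal, and zero row sums, so it is a singular Laplacian-type matrix with $M\mathbf{1}=\mathbf{0}$.

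The crux is to deduce that the eigenvalue $0$ of $M$ is semisimple, and here the shift is the point. Put $s=\max_i d_i$ and $B = sI - M$. Then $B\ge 0$ and $B\mathbf{1} = s\mathbf{1}-M\mathbf{1}=s\mathbf{1}$, so $\frac{1}{s}B$ is a stochastic matrix; by Corollary \ref{stoalge} its eigenvalue $1$ has equal algebraic and geometric multiplicity (no irreducibility is needed in Theorem \ref{stoeig}, which is important since $\frac{1}{s}B$ is typically reducible). Translating back through $M = sI - B$ yields ${\rm am}(0,M) = {\rm nullity}(M)$. On the other hand, $I=\langle F_1,\dots,F_n\rangle$ is zero-dimensional (the zero eigenvectors of $\mathcal{L}(H)$ are finite in number) and, by Theorem \ref{mulpoi}, every point of $\mathbb{V}_0(\mathcal{L}(H))$ has multiplicity $1$; hence \eqref{nullHil} together with Theorem \ref{Hilzero} gives ${\rm nullity}(M) = HP_I(d) = |\mathbb{V}_0(\mathcal{L}(H))|$. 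Combining the two, ${\rm am}(0,M) = |\mathbb{V}_0(\mathcal{L}(H))|$.

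Finally I would close as in Theorem \ref{weakirreq}: from ${\rm am}(0,\mathcal{L}(H)) = |\mathbb{V}_0(\mathcal{L}(H))| - {\rm am}(0,M')$ we get ${\rm am}(0,\mathcal{L}(H)) \le |\mathbb{V}_0(\mathcal{L}(H))|$, while Corollary \ref{leqzero} supplies the reverse inequality, forcing equality (and incidentally ${\rm am}(0,M')=0$). The step that requires genuine care, rather than bookkeeping, is the semisimplicity of the zero eigenvalue of $M$: unlike in Theorem \ref{weakirreq}, $M$ is not itself stochastic, its diagonal being the nonconstant degree vector, and one cannot simply rescale rows because $\Delta^{-1}M$ has a different characteristic polynomial and hence a different ${\rm am}(0,\cdot)$. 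The shift $B=sI-M$ is precisely what preserves the eigenvalue-$0$ data of $M$ while producing a bona fide stochastic matrix, so that Corollary \ref{stoalge} can be invoked; identifying this shift is the main obstacle, and everything else follows the template of the first conjecture.
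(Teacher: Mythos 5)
Your proposal is correct and follows essentially the same route as the paper: the matrix $\tfrac{1}{s}B = I - \tfrac{1}{s}M$ you construct is exactly the stochastic matrix $A = I - \tfrac{1}{\Delta}M$ used in the paper's proof, and every other step (Macaulay quotient formula, zero row sums, Corollary \ref{stoalge} for semisimplicity, the Hilbert-function identification of $\mathrm{nullity}(M)$ with $|\mathbb{V}_0(\mathcal{L}(H))|$, and closing via Corollary \ref{leqzero}) matches. Your side remark that $\Delta^{-1}M$ has ``a different $\mathrm{am}(0,\cdot)$'' is inaccurate --- scaling preserves the multiplicity of the zero eigenvalue --- but this does not affect the argument.
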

\begin{proof}
Let $F_{i}(x_{1}, \ldots, x_{n}) = \left(\mathcal{L}(H) \mathbf{x}^{k-1}\right)_{i}$ for all $i \in [n]$. Then the Laplacian characteristic polynomial of $H$ is the resultant $\textsc{Res}(\lambda x_{1}^{k-1} - F_{1}, \ldots, \lambda x_{n}^{k-1} - F_{n})$.

According to the Macaulay algorithm and the notation therein, construct the matrix $M$ for the homogeneous polynomials $F_{1}, \ldots, F_{n}$. Let the monomial $\mathbf{x}^{\alpha} \in S$ and there is a unique $i$ such that $\mathbf{x}^{\alpha} \in S_{i}$. Then the $i$-th diagonal element of $\mathcal{L}(H)$, given by $d_{i \cdots i}$, is precisely the coefficient of $\mathbf{x}^{\alpha}$ in $\mathbf{x}^{\alpha} / x_{i}^{k-1} \cdot F_{i}$. Thus, through an appropriate permutation, we can arrange for the diagonal elements of $M$ to be those of $\mathcal{L}(H)$. Combined with Theorem \ref{MacRes}, this implies that
\begin{equation} \label{Lapchap}
\textsc{Res}(\lambda x_{1}^{k-1} - F_{1}, \ldots, \lambda x_{n}^{k-1} - F_{n}) = \frac{\det(\lambda I - M)}{\det(\lambda I' - M')},
\end{equation}
where $M$ and $M'$ are the matrices in the Macaulay algorithm for the homogeneous polynomials $\mathcal{L}(H) \mathbf{x}^{k-1}$, and $I$ and $I'$ are the identity matrices of their respective dimensions.

Since $\mathcal{L}(H) = \mathcal{D}(H) - \mathcal{A}(H)$, if follows that the sum of all coefficients of $F_{i}$ is zero for all $i \in [n]$. Then we can conclude that each row of $M$ sums to zero. Let $\Delta$ be the maximum degree of $H$, i.e., $\Delta = \max\{d_{i \cdots i} | i \in [n]\}$. Then the matrix
$$
A = I - \frac{1}{\Delta} M
$$
is a stochastic matrix. By Corollary \ref{stoalge}, the algebraic and geometric multiplicities of the eigenvalue $1$ of $A$ are equal. Consequently, it follows that
$$
{\rm am}(0, M) = \textrm{nullity}(M).
$$
By Theorem \ref{Hilzero}, \eqref{nullHil} and Theorem \ref{mulpoi}, we obtain that
$$
{\rm nullity}(M) = |\mathbb{V}_{0}(\mathcal{L}(H))|.
$$
Thus we have
$$
{\rm am}(0, M) = |\mathbb{V}_{0}(\mathcal{L}(H))|.
$$

On the other hand, according to \eqref{Lapchap}, we have
$$
\begin{aligned}
{\rm am}(0, \mathcal{L}(H)) =& {\rm am}(0, M) - {\rm am}(0, M') \\
=&|\mathbb{V}_{0}(\mathcal{L}(H))| - {\rm am}(0, M').
\end{aligned}
$$
Combined with Corollary \ref{leqzero}, we have ${\rm am}(0, \mathcal{L}(H)) \geq |\mathbb{V}_{0}(\mathcal{L}(H))|$. It follows that
$$
{\rm am}(0, \mathcal{L}(H)) = |\mathbb{V}_{0}(\mathcal{L}(H))|.
$$
This completes the proof.
\end{proof}

In particular, we prove that for a connected $k$-uniform hypergraph $H$, the eigenvalue zero of $\mathcal{L}(H)$ is algebraically simple if and only if $|\mathbb{V}_{0}(\mathcal{L}(H))| = 1$.

As shown in \cite{Fan24}, the equality $|\mathbb{V}_{0}(\mathcal{L}(H))| = |\mathbb{V}_{\rho}(\mathcal{A}(H))|$ holds for any connected $k$-uniform hypergraph $H$. By combining Corollary \ref{amrhoeq} and Theorem \ref{eqzero}, we can confirm that Conjecture \ref{conj2} holds.

\begin{theorem} \label{alleq}
Let $H$ be a connected $k$-uniform hypergraph with spectral radius $\rho$. Then
$$
{\rm am}(\rho, \mathcal{A}(H)) = |\mathbb{V}_{\rho}(\mathcal{A}(H))| = |\mathbb{V}_{0}(\mathcal{L}(H))| = {\rm am}(0, \mathcal{L}(H)).
$$
\end{theorem}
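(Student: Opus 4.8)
The plan is to obtain the four-fold equality by concatenating three identities that are already in hand, so the proof is one of assembly rather than fresh computation. Reading the display from left to right, the first equality ${\rm am}(\rho, \mathcal{A}(H)) = |\mathbb{V}_{\rho}(\mathcal{A}(H))|$ is precisely Corollary \ref{amrhoeq}, i.e.\ the content of Conjecture \ref{conj1} established in Theorem \ref{weakirreq}. The last equality ${\rm am}(0, \mathcal{L}(H)) = |\mathbb{V}_{0}(\mathcal{L}(H))|$ is Theorem \ref{eqzero}. Hence the only link still needed is the purely geometric middle equality $|\mathbb{V}_{\rho}(\mathcal{A}(H))| = |\mathbb{V}_{0}(\mathcal{L}(H))|$, comparing the cardinalities of the two projective eigenvarieties.

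For that middle step I would invoke the result of Fan \cite{Fan24}, valid for every connected $k$-uniform hypergraph $H$, asserting $|\mathbb{V}_{\rho}(\mathcal{A}(H))| = |\mathbb{V}_{0}(\mathcal{L}(H))|$; as recorded in the abstract, both counts equal the integer determined by the Smith normal form of the incidence matrix of $H$ over $\mathbb{Z}_{k}$, which is precisely what forces them to agree. Substituting this into the chain yields ${\rm am}(\rho, \mathcal{A}(H)) = |\mathbb{V}_{\rho}(\mathcal{A}(H))| = |\mathbb{V}_{0}(\mathcal{L}(H))| = {\rm am}(0, \mathcal{L}(H))$, which is the assertion.

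Since the assembly itself is immediate, the genuine difficulty lies upstream, in the two multiplicity-equals-count statements being chained. Both rest on the same mechanism: writing the (Laplacian) characteristic polynomial as the Macaulay quotient $\det(\lambda I - M)/\det(\lambda I' - M')$ of Theorem \ref{MacRes}, observing that (after the diagonal similarity of Theorem \ref{YYweakly} in the adjacency case, and after scaling by the maximum degree in the Laplacian case) the numerator matrix $M$ becomes a scalar multiple of a stochastic matrix, so that the relevant eigenvalue has coincident algebraic and geometric multiplicities by Corollary \ref{stoalge}; its nullity is then identified with the number of projective eigenvectors through the Hilbert-polynomial identity \eqref{nullHil} and the multiplicity-one facts of Theorems \ref{multirho} and \ref{mulpoi}, while the lower bounds of Theorem \ref{amrhovar} and Corollary \ref{leqzero} prevent the denominator $M'$ from absorbing any further copies of the eigenvalue. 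The step I expect to be most delicate—and the one doing all the real work—is verifying that $M$ inherits this (sub-)stochastic structure so that Corollary \ref{stoalge} applies; once that is secured, the present theorem follows formally by the concatenation above.
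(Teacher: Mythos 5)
Your proposal is correct and matches the paper's argument exactly: the theorem is obtained by chaining Corollary \ref{amrhoeq}, Fan's identity $|\mathbb{V}_{\rho}(\mathcal{A}(H))| = |\mathbb{V}_{0}(\mathcal{L}(H))|$ from \cite{Fan24}, and Theorem \ref{eqzero}. Your additional remarks on where the real work lies (the stochastic-matrix/Macaulay mechanism behind the two multiplicity-equals-count results) accurately reflect the upstream proofs as well.
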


Let $H$ be a connected $k$-uniform hypergraph. Fan et al. \cite{FWBWLZ19} also proved that the signless Laplacian tensor $\mathcal{Q}(H)$ has a zero eigenvalue if and only if there exists a nonsingular diagonal matrix $D$ such that $\mathcal{Q}(H) = D^{-(m-1)} \mathcal{L}(H) D$, which implies that $\mathcal{Q}(H)$ and $\mathcal{L}(H)$ have the same spectrum \cite{Shao13} in this case.

\begin{corollary}
Let $H$ be a connected $k$-uniform hypergraph. If zero is an eigenvalue of $\mathcal{Q}(H)$, then
$$
{\rm am}(0, \mathcal{Q}(H)) = |\mathbb{V}_{0}(\mathcal{Q}(H))|.
$$
\end{corollary}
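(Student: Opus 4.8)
The plan is to reduce the statement about $\mathcal{Q}(H)$ to the already-established Theorem \ref{alleq} for $\mathcal{L}(H)$, by exploiting the diagonal similarity between the two tensors that is guaranteed precisely by the hypothesis. Since zero is assumed to be an eigenvalue of $\mathcal{Q}(H)$, the result of Fan et al.\ \cite{FWBWLZ19} recalled just above furnishes a nonsingular diagonal matrix $D$ with $\mathcal{Q}(H) = D^{-(k-1)} \mathcal{L}(H) D$. By \cite[Theorem 2.3]{Shao13}, diagonal similarity preserves the spectrum, so $\mathrm{Spec}(\mathcal{Q}(H)) = \mathrm{Spec}(\mathcal{L}(H))$ as multisets, and in particular
$$
{\rm am}(0, \mathcal{Q}(H)) = {\rm am}(0, \mathcal{L}(H)).
$$

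It then remains to show that the two projective eigenvarieties have the same cardinality, i.e.\ $|\mathbb{V}_{0}(\mathcal{Q}(H))| = |\mathbb{V}_{0}(\mathcal{L}(H))|$. First I would verify, by the same entrywise computation used in the proof of Theorem \ref{weakirreq}, the identity
$$
\bigl(D^{-(k-1)} \mathcal{L}(H) D\bigr) \mathbf{y}^{k-1} = D^{-(k-1)} \bigl(\mathcal{L}(H) (D\mathbf{y})^{k-1}\bigr),
$$
which follows directly from Shao's definition of the product $P \mathcal{A} Q$. Since $D^{-(k-1)}$ is invertible, this shows that $\mathcal{Q}(H) \mathbf{y}^{k-1} = \mathbf{0}$ if and only if $\mathcal{L}(H)(D\mathbf{y})^{k-1} = \mathbf{0}$; hence the scaling $\mathbf{y} \mapsto D\mathbf{y}$ descends to a well-defined bijection $\mathbb{V}_{0}(\mathcal{Q}(H)) \to \mathbb{V}_{0}(\mathcal{L}(H))$ on projective space, giving the claimed equality of cardinalities.

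Chaining these facts with Theorem \ref{alleq}, which supplies ${\rm am}(0, \mathcal{L}(H)) = |\mathbb{V}_{0}(\mathcal{L}(H))|$, yields
$$
{\rm am}(0, \mathcal{Q}(H)) = {\rm am}(0, \mathcal{L}(H)) = |\mathbb{V}_{0}(\mathcal{L}(H))| = |\mathbb{V}_{0}(\mathcal{Q}(H))|,
$$
which is the desired conclusion. The argument is essentially a transport of structure along the diagonal similarity, so I do not anticipate a genuine obstacle; the only point requiring care is confirming that this similarity acts compatibly both on the characteristic polynomial (hence on the algebraic multiplicity) and on the defining equations of the eigenvariety (hence on its point count). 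Both are handled by the cited product formula of Shao together with the entrywise identity above, so the main task here is bookkeeping rather than any new idea.
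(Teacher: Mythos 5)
Your proof is correct and follows essentially the same route the paper intends: the corollary is stated immediately after the remark that a zero eigenvalue of $\mathcal{Q}(H)$ forces $\mathcal{Q}(H) = D^{-(k-1)}\mathcal{L}(H)D$ for a nonsingular diagonal $D$, so that the spectrum (hence ${\rm am}(0,\cdot)$) and, via the scaling $\mathbf{y}\mapsto D\mathbf{y}$, the eigenvariety $\mathbb{V}_0$ are both transported from $\mathcal{L}(H)$, and Theorem~\ref{eqzero} finishes the argument. Your only addition is spelling out the entrywise identity justifying the bijection of eigenvarieties, which the paper leaves implicit.
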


\section{Applications}

In this section, we will provide some applications of Theorem \ref{alleq}. We first introduce several classes of hypergraphs \cite{HQS13}. Let $k \geq 3$ be an integer, and let $G$ be a simple graph. The \emph{$k$-th power of $G$}, denoted by $G^{(k)}$, is a $k$-uniform hypergraph obtained from $G$ by replacing each edge of $G$ with an $k$-set by adding $k-2$ additional vertices. A \emph{complete $k$-uniform hypergraph} on $n$ vertices, denoted by $K_n^{[k]}$, is a hypergraph with any $k$-subsets of the vertex set being an edge. A \emph{cored hypergraph} is one such that each edge contains a cored vertex, i.e. the vertex of degree one. A $k$-uniform \emph{generalized squid} \cite{FBH19}, denoted by $S(k, t)$, where $1 \leq t \leq k$, is obtained from an edge by attaching $t$ edges to $t$ vertices in the edge respectively. A uniform hypergraph is called a \emph{sunflower} \cite{BCL25} if all of its hyperedges intersect in the same set of vertices, called its seeds. Let $S(k, s, p)$ denote a $k$-uniform sunflower with $s$ seeds and $p$ petals. Obviously, power hypergraphs and sunflowers are cored hypergraphs.

The \emph{support} (or the \emph{zero-nonzero patter} \cite{Shao13}) of tensor $\mathcal{A}$, denoted by ${\rm supp}(\mathcal{A}) = (s_{i_{1} \ldots i_{k}})$, is defined as a tensor with same order and dimension of $\mathcal{A}$, such that $s_{i_{1} \ldots i_{k}} = 1$ if $a_{i_{1} \ldots i_{k}} \neq 0$, and $s_{i_{1} \ldots i_{k}} = 0$ otherwise. $\mathcal{A}$ is called \emph{combinatorial symmetric} if ${\rm supp}(\mathcal{A})$ is symmetric \cite{FBH19}. It is easy to see that combinatorial symmetric tensors are generalizations of symmetric tensors. For a combinatorial symmetric tensor $\mathcal{A}$ of order $k$ and dimension $n$. Set
$$
E(\mathcal{A})=\{(i_{1}, \ldots, i_{k}) \in [n]^{k}: a_{i_{1} \ldots i_{k}} \neq 0, i_{1} \leq \cdots \leq i_{k}\},
$$
and for each $e = (i_{1}, \ldots, i_{k}) \in E(\mathcal{A})$ and $j \in [n]$, define
$$
b_{e,j}=|\{t: i_{t} = j, t \in [k] \}|.
$$
The matrix $B_{\mathcal{A}} = (b_{e,j})$ is called the \emph{incidence matrix} of $\mathcal{A}$ \cite{FBH19}.

Using the \emph{Smith normal form} (see \cite{Fan24, FBH19, FHB22, FTL22, FWBWLZ19}) for incidence matrices over $\mathbb{Z}_{k}$, Fan et al. \cite{FBH19} characterized the structure of the eigenvariety associated with the spectral radius of combinatorial symmetric tensors.

\begin{theorem} [\cite{FBH19}] \label{provarrho}
Let $\mathcal{A}$ be a nonnegative combinatorial symmetric weakly irreducible tensor of order $k$ and dimension $n$. Suppose that $B_{\mathcal{A}}$ has a Smith normal form over $\mathbb{Z}_{k}$ with invariant divisors $d_{1}, \ldots, d_{r}$. Then $1 \leq r \leq n-1$, and
$$
\mathbb{V}_{\rho(\mathcal{A})}(\mathcal{A}) \cong \mathbb{S}_0(\mathcal{A}) \cong \underset{i, d_{i} \neq 1} \oplus \mathbb{Z}_{d_{i}} \oplus \underbrace{\mathbb{Z}_{k} \oplus \cdots \oplus \mathbb{Z}_{k}}_{n-r-1 {\rm ~copies}}.
$$
Consequently, $|\mathbb{V}_{\rho(\mathcal{A})}(\mathcal{A})| =  k^{n-r-1} \prod_{i=1}^{r} d_{i}$.
\end{theorem}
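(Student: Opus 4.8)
The plan is to realize the eigenvariety $\mathbb{V}_{\rho(\mathcal{A})}(\mathcal{A})$ concretely as a finite abelian group and then read off its structure from the Smith normal form of $B_{\mathcal{A}}$. Set $\rho := \rho(\mathcal{A})$. First I would determine the exact shape of the eigenvectors for $\rho$. Since $\mathcal{A}$ is nonnegative and weakly irreducible, it has a strictly positive Perron eigenvector $\mathbf{x}_+$, unique up to a positive scalar, with $\mathcal{A}\mathbf{x}_+^{k-1}=\rho\,\mathbf{x}_+^{[k-1]}$. Given any $\mathbf{x}$ with $\mathcal{A}\mathbf{x}^{k-1}=\rho\,\mathbf{x}^{[k-1]}$, passing to componentwise moduli and using nonnegativity yields $\mathcal{A}|\mathbf{x}|^{k-1}\ge\rho\,|\mathbf{x}|^{[k-1]}$, and weak irreducibility forces equality, so $|\mathbf{x}|$ is itself a Perron vector and $|\mathbf{x}|=c\,\mathbf{x}_+$ for some $c>0$. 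Writing $\mathbf{x}=D\mathbf{x}_+$ with $D=\mathrm{diag}(\theta_1,\dots,\theta_n)$ and $|\theta_j|=1$, the $i$-th eigenequation together with the equality case of the triangle inequality (all summands being positive multiples of unit-modulus numbers) forces $\theta_{i_2}\cdots\theta_{i_k}=\theta_i^{\,k-1}$ for every nonzero entry $a_{i i_2\cdots i_k}$.

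Next I would convert these phase constraints into a linear system over $\mathbb{Z}_k$. Multiplying the relation at the leading index $i$ by $\theta_i$ shows that for each edge $e=(i_1,\dots,i_k)\in E(\mathcal{A})$ all the powers $\theta_{i_t}^{\,k}$ agree and equal the edge product $\prod_{s}\theta_{i_s}$; by connectivity (weak irreducibility) this common value is a single constant across all vertices, which the projective scaling lets me normalize to $1$. Hence each $\theta_j$ is a $k$-th root of unity, say $\theta_j=\omega^{c_j}$ for a fixed primitive $k$-th root of unity $\omega$ and $c_j\in\mathbb{Z}_k$, and the edge constraints become $\sum_j b_{e,j}c_j\equiv 0\pmod k$ for every $e$, that is, $B_{\mathcal{A}}\mathbf{c}\equiv\mathbf 0\pmod k$. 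Conversely, every such $\mathbf c$ produces an eigenvector, so $\mathbf x\mapsto\mathbf c$ identifies $\mathbb{V}_{\rho}(\mathcal{A})$ — after quotienting by the residual projective scalar, which (to preserve the root-of-unity form) must itself be a $k$-th root of unity and so acts by $\mathbf c\mapsto\mathbf c+t\mathbf 1$ — with the kernel of $B_{\mathcal{A}}$ on $\mathbb{Z}_k^n$ modulo $\langle\mathbf 1\rangle$, where $B_{\mathcal{A}}\mathbf 1\equiv\mathbf 0$ because each edge has $k$ vertices counted with multiplicity. This is precisely the group $\mathbb{S}_0(\mathcal{A})$ of \cite{FBH19}.

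Finally I would compute $\ker_{\mathbb{Z}_k}(B_{\mathcal{A}})/\langle\mathbf 1\rangle$ from the invariant divisors $d_1,\dots,d_r$. Applying a unimodular change of basis over $\mathbb{Z}_k$, the solution group of $B_{\mathcal{A}}\mathbf c=\mathbf 0$ in $\mathbb{Z}_k^n$ splits as $\bigoplus_{i=1}^r\{c\in\mathbb{Z}_k:d_i c\equiv 0\}\oplus\mathbb{Z}_k^{\,n-r}$; since each $d_i$ divides $k$, the $i$-th block is cyclic of order $d_i$, giving $\bigoplus_{i=1}^r\mathbb{Z}_{d_i}\oplus\mathbb{Z}_k^{\,n-r}$ of order $k^{\,n-r}\prod_i d_i$. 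As $\mathbf 1$ has order $k$, which equals the exponent of this group, the cyclic subgroup $\langle\mathbf 1\rangle$ is a direct summand; removing it deletes exactly one $\mathbb{Z}_k$ factor, yielding $\bigoplus_{i,\,d_i\neq 1}\mathbb{Z}_{d_i}\oplus\mathbb{Z}_k^{\,n-r-1}$ and order $k^{\,n-r-1}\prod_{i=1}^r d_i$. The bounds $1\le r\le n-1$ follow because $B_{\mathcal{A}}$ is nonzero modulo $k$ (weak irreducibility), while $\mathbf 1$ is always a nontrivial kernel element, so the rank is at most $n-1$.

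I expect the main obstacle to be the first step: rigorously extracting the root-of-unity structure from the equality case of Perron–Frobenius and propagating the single normalizing constant across the whole hypergraph via weak irreducibility, together with the bookkeeping that matches the projective scalar quotient \emph{exactly} to $\langle\mathbf 1\rangle$ — neither a larger subgroup nor a smaller one. The final group-theoretic step is then routine, relying only on the standard fact that an element of maximal order generates a direct summand of a finite abelian group.
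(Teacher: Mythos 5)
Theorem \ref{provarrho} is quoted from \cite{FBH19} and the paper contains no proof of it, so there is no internal argument to compare against; your reconstruction is, in substance, the proof given in that reference and it is correct. The three stages are all sound --- the Perron--Frobenius equality case reduces eigenvectors for $\rho$ to phase vectors over the Perron vector, combinatorial symmetry (needed so that every index of a nonzero entry can serve as the leading index) together with connectivity forces the phases to be $k$-th roots of unity satisfying $B_{\mathcal{A}}\mathbf{c}\equiv\mathbf{0}\pmod{k}$ with the projective scaling accounting exactly for $\langle\mathbf{1}\rangle$, and the final identification of the quotient's isomorphism type (not merely its order) is legitimate because an element of maximal order generates a direct summand and finite abelian groups satisfy cancellation.
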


Let $H$ be a $k$-uniform hypergraph. Then the \emph{incidence matrix} of $H$, denoted by $B_{H} = (b_{e,v})$, is defined such that $b_{e,v} = 1$ if $v \in e$, and $b_{e,v} = 0$ otherwise. This definition coincides with that of the incidence matrix of $\mathcal{A}(H)$. Through the computation of the Smith normal form for the incidence matrix $B_{H}$ over $\mathbb{Z}_k$, Fan et al. \cite{Fan24, FBH19} established the following results on the eigenvariety associated with the spectral radius for connected uniform hypergraphs.

\begin{theorem} [\cite{Fan24}, \cite{FBH19}] \label{rhovar}
Let $H$ be a connected $k$-uniform hypergraph with spectral radius $\rho$. Then the following results hold.
\begin{itemize}
\item[(1)] If $H$ is a hypertree with $m$ edges, then $|\mathbb{V}_{\rho}(\mathcal{A}(H))| = k^{m(k-2)}$.
\item[(2)] If $H = G^{(k)}$ for a connected graph $G$ on $n$ vertices with $m$ edges, then $|\mathbb{V}_{\rho}(\mathcal{A}(H))| = k^{n+m(k-3)-1}$.
\item[(3)] If $H = K_n^{[k]}$ with $n > k$, then $|\mathbb{V}_{\rho}(\mathcal{A}(H))| = 1$.
\item[(4)] If $H$ is a cored hypergraph on $n$ vertices with $m$ edges, then $|\mathbb{V}_{\rho}(\mathcal{A}(H))| = k^{n-m-1}$
\item[(5)] If $H = S(k, t)$, then $|\mathbb{V}_{\rho}(\mathcal{A}(H))| = k^{(t+1)(k-2)}$.
\end{itemize}
\end{theorem}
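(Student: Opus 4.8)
The plan is to derive all five formulas from the single structural result Theorem~\ref{provarrho}, which reduces each computation to the Smith normal form of the incidence matrix $B_{H}$ over $\mathbb{Z}_{k}$: if this form has $r$ invariant divisors $d_{1},\ldots,d_{r}$, then $|\mathbb{V}_{\rho}(\mathcal{A}(H))| = k^{\,n-r-1}\prod_{i=1}^{r} d_{i}$ with $n=|V(H)|$ (recall from the text that $B_{\mathcal{A}(H)}=B_{H}$). So for each class it suffices to pin down $r$ and the $d_{i}$ of $B_{H}$ modulo $k$, after which the stated exponent is a bookkeeping substitution of the vertex and edge counts.

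The observation that settles parts (1), (2), (4), and most of (5) simultaneously is this: if the $m\times n$ matrix $B_{H}$ (with $m=|E(H)|$) contains an $m\times m$ submatrix that is unimodular over $\mathbb{Z}_{k}$, then $r=m$ and $d_{1}=\cdots=d_{m}=1$, whence $|\mathbb{V}_{\rho}(\mathcal{A}(H))| = k^{\,n-m-1}$. For a cored hypergraph such a block is immediate: each edge $e$ owns a degree-one vertex $v_{e}$, these $v_{e}$ are pairwise distinct, and the corresponding $m$ columns form a permuted identity block; this is part (4). Part (2) follows because $G^{(k)}$ is cored (the $k-2\ge 1$ vertices added to each edge have degree one), so substituting the vertex count $n+m(k-2)$ and edge count $m$ into $k^{\,n-m-1}$ gives $k^{\,n+m(k-3)-1}$. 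For $1\le t<k$ the squid $S(k,t)$ is likewise cored, and its counts $k+t(k-1)$ and $t+1$ yield $k^{(t+1)(k-2)}$; the non-cored case $t=k$ is handled by the same triangular argument, since each attached edge still owns $k-1$ degree-one vertices while the central edge contributes one further independent row. For a hypertree (part (1)) I would instead order the edges $e_{1},\ldots,e_{m}$ so that each $e_{i}$ meets $e_{1}\cup\cdots\cup e_{i-1}$ in a single vertex; picking a vertex $w_{i}\in e_{i}$ that is new at step $i$, the columns $w_{1},\ldots,w_{m}$ form a lower-triangular block with unit diagonal over $\mathbb{Z}_{k}$, so again $r=m$ and all $d_{i}=1$, and with $n=m(k-1)+1$ this gives $k^{\,m(k-2)}$.

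The genuinely different case, and where I expect the main difficulty, is the complete hypergraph $K_{n}^{[k]}$ in part (3): here no $m\times m$ unimodular block exists, so the rank-and-divisor structure over the non-PID ring $\mathbb{Z}_{k}$ must be analysed directly. I would compute the image of $B_{H}^{\mathrm{T}}$ inside the vertex module $\mathbb{Z}_{k}^{n}$. Each row of $B_{H}$ is the indicator of a $k$-subset and hence sums to $k\equiv 0\pmod{k}$, so the image is contained in the sum-zero subgroup $W=\{x\in\mathbb{Z}_{k}^{n}:\sum_{i}x_{i}=0\}$. For the reverse inclusion the hypothesis $n>k$ is essential: given vertices $a\neq b$ there is a $(k-1)$-subset $T$ avoiding both (possible precisely because $n-2\ge k-1$), and then $\chi_{T\cup\{a\}}-\chi_{T\cup\{b\}}=\mathbf{e}_{a}-\mathbf{e}_{b}$ lies in the image; these differences generate all of $W$. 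Since $W$ is a free direct summand of $\mathbb{Z}_{k}^{n}$ of rank $n-1$ and index $k$, the Smith normal form of $B_{H}$ over $\mathbb{Z}_{k}$ has exactly $r=n-1$ invariant divisors, all equal to $1$, and Theorem~\ref{provarrho} yields $|\mathbb{V}_{\rho}(\mathcal{A}(K_{n}^{[k]}))| = k^{\,n-(n-1)-1}=1$. This computation also clarifies the role of $n>k$, marking the transition from the single-edge hypergraph $n=k$, which is merely cored and gives $k^{\,k-2}$, to the total collapse $|\mathbb{V}_{\rho}|=1$ once the vertex set is large enough to realise every difference $\mathbf{e}_{a}-\mathbf{e}_{b}$.
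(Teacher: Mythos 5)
Your proposal is correct and follows essentially the same route as the source: the paper itself gives no proof of Theorem~\ref{rhovar} but attributes it to the Smith-normal-form computation of $B_{H}$ over $\mathbb{Z}_{k}$ combined with Theorem~\ref{provarrho}, which is exactly what you carry out (the unimodular $m\times m$ block for cored/tree-like cases, and the identification of the row module of $B_{K_{n}^{[k]}}$ with the free rank-$(n-1)$ sum-zero summand for part (3)). All vertex/edge counts and the resulting exponents check out.
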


Recently, there have also been some advances in the study of the algebraic multiplicity of eigenvalues and Laplacian eigenvalues for uniform hypergraphs, see \cite{BCL25, CB24, CvB24, LCB25, Zheng21, Zheng24}. Theorem \ref{weakirreq} and Theorem \ref{eqzero} enable us to derive the algebraic properties of the eigenvalues of hypergraphs from their geometric properties.

\begin{corollary}
Let $H$ be a connected $k$-uniform hypergraph with spectral radius $\rho$. Then the following results hold.
\begin{itemize}
\item[(1)] If $H$ is a hypertree with $m$ edges, then ${\rm am}(\rho, \mathcal{A}(H)) = {\rm am}(0, \mathcal{L}(H)) = k^{m(k-2)}$.
\item[(2)] If $H = G^{(k)}$ for a connected graph $G$ on $n$ vertices with $m$ edges, then ${\rm am}(\rho, \mathcal{A}(H)) = {\rm am}(0, \mathcal{L}(H)) = k^{n+m(k-3)-1}$.
\item[(3)] If $H = K_n^{[k]}$ with $n > k$, then ${\rm am}(\rho, \mathcal{A}(H)) = {\rm am}(0, \mathcal{L}(H)) = 1$.
\item[(4)] If $H$ is a cored hypergraph on $n$ vertices with $m$ edges, then ${\rm am}(\rho, \mathcal{A}(H)) = {\rm am}(0, \mathcal{L}(H)) = k^{n-m-1}$.
\item[(5)] If $H = S(k, t)$, then ${\rm am}(\rho, \mathcal{A}(H)) = {\rm am}(0, \mathcal{L}(H)) = k^{(t+1)(k-2)}$.
\end{itemize}
\end{corollary}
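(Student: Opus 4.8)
The plan is to derive this corollary as an immediate specialization of the two conjectures just established, combined with the known enumeration of the projective eigenvariety. Concretely, Theorem~\ref{alleq} asserts that for every connected $k$-uniform hypergraph $H$ one has
\[
{\rm am}(\rho, \mathcal{A}(H)) = |\mathbb{V}_{\rho}(\mathcal{A}(H))| = |\mathbb{V}_{0}(\mathcal{L}(H))| = {\rm am}(0, \mathcal{L}(H)),
\]
so in particular ${\rm am}(\rho, \mathcal{A}(H)) = {\rm am}(0, \mathcal{L}(H)) = |\mathbb{V}_{\rho}(\mathcal{A}(H))|$. Thus the entire task reduces to reading off the value of $|\mathbb{V}_{\rho}(\mathcal{A}(H))|$ for each of the five families and transporting it, through this equality chain, to the two algebraic multiplicities.

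For each family I would simply invoke the corresponding item of Theorem~\ref{rhovar}. For a hypertree with $m$ edges it gives $|\mathbb{V}_{\rho}(\mathcal{A}(H))| = k^{m(k-2)}$, yielding (1); for a power hypergraph $G^{(k)}$ it gives $k^{n+m(k-3)-1}$, yielding (2); for the complete hypergraph $K_n^{[k]}$ with $n > k$ it gives $1$, yielding (3); for a cored hypergraph on $n$ vertices with $m$ edges it gives $k^{n-m-1}$, yielding (4); and for a generalized squid $S(k,t)$ it gives $k^{(t+1)(k-2)}$, yielding (5). In every case the equalities of Theorem~\ref{alleq} convert the geometric count directly into the stated values of ${\rm am}(\rho, \mathcal{A}(H))$ and ${\rm am}(0, \mathcal{L}(H))$.

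The substantive difficulty has already been absorbed upstream: into Theorem~\ref{alleq}, which resolves both conjectures, and into Theorem~\ref{rhovar}, whose values rest on the Smith-normal-form computations over $\mathbb{Z}_k$ furnished by Theorem~\ref{provarrho}. Consequently no genuine obstacle remains here, and the corollary is a line-by-line substitution. The only hypothesis worth verifying is that each listed family is indeed connected (hence weakly irreducible), so that Theorem~\ref{alleq} applies; this is automatic from the definitions of hypertree, power hypergraph, complete hypergraph, cored hypergraph, and generalized squid, each of which is connected by construction.
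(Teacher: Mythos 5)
Your proposal is correct and matches the paper's (implicit) argument exactly: the corollary is obtained by combining the equality chain of Theorem~\ref{alleq} with the values of $|\mathbb{V}_{\rho}(\mathcal{A}(H))|$ listed in Theorem~\ref{rhovar}. Note only that connectedness is already a hypothesis of the corollary, so there is nothing extra to verify there.
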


Next, we introduce two operations between hypergraphs. A hypergraph is called \emph{nontrivial} if it contains more than one vertex. Let $H_{1}, H_{2}$ be two vertex-disjoint connected nontrivial hypergraphs, and let $v_{1} \in V(H_{1}), v_{2} \in V(H_{2})$. The \emph{coalescence} of $H_{1}, H_{2}$ with respect to $v_{1}, v_{2}$, denoted by $H := H_{1}(v_{1}) \odot H_{2}(v_{2})$, is obtained from $H_{1}, H_{2}$ by identifying $v_{1}$ with $v_{2}$ and forming a new vertex $u$, which is also written as $H_{1}(u) \odot H_{2}(u)$ \cite{FTL22}.

\begin{definition} [\cite{CD12, FTL22, Shao13}] \label{Carpro}
Let $H_1$ and $H_2$ be two $k$-uniform hypergraphs. The \emph{Cartesian product} of $H_1$ and $H_2$, denoted by $H_1 \square H_2$, has vertex set $V(H_1 \square H_2) = V(H_1) \times V(H_2)$ and $\{(i_1, j_1), \ldots, (i_k, j_k)\} \in E(H_1 \square H_2)$ if and only if one of the following two conditions holds:
\begin{itemize}
\item[(1)] $i_{1} = \cdots = i_{k}$ and $\{j_{1}, \ldots, j_{k}\} \in E(H_2)$.
\item[(2)] $j_{1} = \cdots = j_{k}$ and $\{i_{1}, \ldots, i_{k}\} \in E(H_1)$.
\end{itemize}
\end{definition}

It is easy to see that the hypergraph $H_1 \square H_2$ is connected if and only if $H_1$ and $H_2$ are both connected. Fan et la. \cite{FTL22} also proved the following conclusions.

\begin{theorem} [\cite{FTL22}] \label{coalvar}
Let $H = H_{1}(u) \odot H_{2}(u)$, where $H_{1}, H_{2}$ are two nontrivial connected $k$-uniform hypergraphs with spectral radius $\rho_{1}, \rho_{2}$, respectively. Then $|\mathbb{V}_{\rho}(\mathcal{A}(H)| = |\mathbb{V}_{\rho_{1}}(\mathcal{A}(H_{1})| \cdot |\mathbb{V}_{\rho_{2}}(\mathcal{A}(H_{2})|$.
\end{theorem}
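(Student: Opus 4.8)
The plan is to convert the statement into a purely combinatorial count over $\mathbb{Z}_{k}$, so that the (incomparable) values of $\rho,\rho_{1},\rho_{2}$ play no role. The starting point is the description of the eigenvariety at the spectral radius that underlies Theorem \ref{provarrho}. Since $H$ is connected (as $H_{1},H_{2}$ are connected and share $u$), $\mathcal{A}(H)$ is nonnegative and weakly irreducible, so $\rho$ has a positive eigenvector $\mathbf{r}$. For any $\mathbf{x}$ with $\mathcal{A}(H)\mathbf{x}^{k-1}=\rho\,\mathbf{x}^{[k-1]}$, taking moduli and applying the triangle inequality gives $\mathcal{A}(H)|\mathbf{x}|^{k-1}\ge\rho\,|\mathbf{x}|^{[k-1]}$ entrywise; Perron--Frobenius then forces $|\mathbf{x}|$ to be proportional to $\mathbf{r}$ and forces equality in every triangle inequality. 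Equality means that for each edge $e$ the products $\prod_{w\in e\setminus v}x_{w}$ share a common argument, which after writing $x_{v}=r_{v}\,e^{2\pi i c_{v}/k}$ becomes the linear system $\sum_{v\in e}c_{v}\equiv 0 \pmod k$ for every $e\in E(H)$. Setting $Z_{H}:=\{\mathbf{c}\in\mathbb{Z}_{k}^{V(H)}:\sum_{v\in e}c_{v}\equiv 0 \ (\mathrm{mod}\ k)\ \forall e\in E(H)\}=\ker(B_{H}\bmod k)$, I obtain a group isomorphism $\mathbb{V}_{\rho}(\mathcal{A}(H))\cong Z_{H}/\langle\mathbf{1}\rangle$, where $\mathbf{1}$ is the all-ones vector (which lies in $Z_{H}$ since $|e|=k$) and the quotient records projective scaling; in particular $|\mathbb{V}_{\rho}(\mathcal{A}(H))|=|Z_{H}|/k$. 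I would either cite this identification from \cite{FBH19} or include the short argument just sketched.

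With this reformulation the spectral radius disappears and the problem reduces to counting solutions of the $\mathbb{Z}_{k}$-system attached to $H$. Because $H=H_{1}(u)\odot H_{2}(u)$, its edge set is the disjoint union $E(H_{1})\sqcup E(H_{2})$ and its vertex set is $V(H_{1})\cup V(H_{2})$ with the single identification $u$. Hence a vector $\mathbf{c}\in\mathbb{Z}_{k}^{V(H)}$ lies in $Z_{H}$ exactly when its restrictions $\mathbf{c}^{(1)}\in\mathbb{Z}_{k}^{V(H_{1})}$ and $\mathbf{c}^{(2)}\in\mathbb{Z}_{k}^{V(H_{2})}$ lie in $Z_{H_{1}}$ and $Z_{H_{2}}$ and agree at the shared vertex. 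This exhibits $Z_{H}$ as the fiber product
$$
Z_{H}\;\cong\;Z_{H_{1}}\times_{\mathbb{Z}_{k}}Z_{H_{2}}\;=\;\{(\mathbf{c}^{(1)},\mathbf{c}^{(2)})\in Z_{H_{1}}\times Z_{H_{2}}:c^{(1)}_{u}=c^{(2)}_{u}\}
$$
taken over the two evaluation homomorphisms $\pi_{i}:Z_{H_{i}}\to\mathbb{Z}_{k}$, $\mathbf{c}\mapsto c_{u}$.

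The small technical heart of the argument is that each $\pi_{i}$ is surjective, so that the fiber product has the expected size. This is immediate: the all-ones vector belongs to $Z_{H_{i}}$ (again since every edge has exactly $k$ vertices) and $\pi_{i}(\mathbf{1})=1$ generates $\mathbb{Z}_{k}$. A surjective homomorphism of finite abelian groups onto $\mathbb{Z}_{k}$ has all $k$ fibers of equal size $|Z_{H_{i}}|/k$, so
$$
|Z_{H}|\;=\;\sum_{j\in\mathbb{Z}_{k}}|\pi_{1}^{-1}(j)|\,|\pi_{2}^{-1}(j)|\;=\;k\cdot\frac{|Z_{H_{1}}|}{k}\cdot\frac{|Z_{H_{2}}|}{k}\;=\;\frac{|Z_{H_{1}}|\,|Z_{H_{2}}|}{k}.
$$

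Finally I would combine these with $|\mathbb{V}_{\rho}(\mathcal{A}(H))|=|Z_{H}|/k$ and the analogous identities $|\mathbb{V}_{\rho_{i}}(\mathcal{A}(H_{i}))|=|Z_{H_{i}}|/k$ to conclude
$$
|\mathbb{V}_{\rho}(\mathcal{A}(H))|=\frac{|Z_{H}|}{k}=\frac{|Z_{H_{1}}|\,|Z_{H_{2}}|}{k^{2}}=\frac{|Z_{H_{1}}|}{k}\cdot\frac{|Z_{H_{2}}|}{k}=|\mathbb{V}_{\rho_{1}}(\mathcal{A}(H_{1}))|\cdot|\mathbb{V}_{\rho_{2}}(\mathcal{A}(H_{2}))|,
$$
which is the desired identity. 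I expect the main obstacle to lie entirely in the first paragraph: making the identification $\mathbb{V}_{\rho}(\mathcal{A}(H))\cong Z_{H}/\langle\mathbf{1}\rangle$ watertight, in particular using Perron--Frobenius to force moduli equal to $\mathbf{r}$ and checking that projective scaling corresponds exactly to the subgroup $\langle\mathbf{1}\rangle$. Once this dictionary is in place, the coalescence identity is a clean fiber-product count in which the actual eigenvalues never appear.
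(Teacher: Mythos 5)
Your argument is correct. Note that the paper itself gives no proof of this statement: it is quoted verbatim from \cite{FTL22}, so there is no internal proof to compare against. Your reconstruction --- identifying $\mathbb{V}_{\rho}(\mathcal{A}(H))$ with $Z_H/\langle\mathbf{1}\rangle$ where $Z_H=\ker(B_H \bmod k)$, and then counting the fiber product over the evaluation map at the coalescence vertex --- is essentially the mechanism by which \cite{FTL22} (building on \cite{FBH19}) proves it, via the incidence matrix over $\mathbb{Z}_k$. The only step needing care is the one you flag: after Perron--Frobenius forces $|\mathbf{x}|$ proportional to the Perron vector and equality in every triangle inequality, you get $\sum_{w\in e}\theta_w\equiv k\theta_v \pmod{2\pi}$ for all $v\in e$, whence $k\theta_v\equiv k\theta_{v'}$ for all $v,v'$ by connectivity, and only after a global projective rescaling do all phases land in $\tfrac{2\pi}{k}\mathbb{Z}$; this normalization, the verification that every $\mathbf{c}\in Z_H$ conversely yields an eigenvector, and the fact that residual scalings form exactly $\langle\mathbf{1}\rangle$ are precisely the content of Theorem~\ref{provarrho}, which you may simply cite. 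The surjectivity of $\pi_i$ via $\pi_i(\mathbf{1})=1$ and the resulting count $|Z_H|=|Z_{H_1}|\,|Z_{H_2}|/k$ are clean and correct.
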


It follows directly from Theorem \ref{coalvar} that
$$
{\rm am}(\rho, \mathcal{A}(H_{1}(u) \odot H_{2}(u))) = {\rm am}(\rho_{1}, \mathcal{A}(H_{1})) \cdot {\rm am}(\rho_{2}, \mathcal{A}(H_{2}))
$$
and
$$
{\rm am}(0, \mathcal{L}(H_{1}(u) \odot H_{2}(u))) = {\rm am}(0, \mathcal{L}(H_{1})) \cdot {\rm am}(0, \mathcal{L}(H_{2})).
$$

\begin{theorem} [\cite{FTL22}] \label{Carvar}
Let $H_{1}$ be a connected $k$-uniform hypergraph on $n_{1}$ vertices whose incidence matrix $B_{H_{1}}$ has invariants $d_{1}, \ldots, d_{r_1}$, and let $H_{2}$ be a connected $k$-uniform hypergraph on $n_{2}$ vertices whose incidence matrix $B_{H_{2}}$ has invariants $\bar{d}_{1}, \ldots, \bar{d}_{r_1}$. Then
$$
|\mathbb{V}_{\rho}(\mathcal{A}(H_1 \square H_2))| = k^{(n_{1} - r_{1})(n_{2} - r_{2}) - 1} \prod_{i \in [r_{1}], j \in [r_{2}]} \gcd(d_i, \bar{d}_j) \prod_{i \in [r_{1}]} d_i^{n_{2} - r_{2}}
\prod_{j \in [r_{2}]} \bar{d}_j^{n_{1} - r_{1}}.
$$
In particular, if $k$ is prime, then
$$
|\mathbb{V}_{\rho}(\mathcal{A}(H_1 \square H_2))| = k^{(n_{1} - r_{1})(n_{2} - r_{2}) - 1},
$$
where $r_{1}, r_{2}$ are the ranks of $B_{H_{1}}, B_{H_{2}}$ over the field $\mathbb{Z}_{k}$, respectively.
\end{theorem}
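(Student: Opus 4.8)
The plan is to apply Theorem~\ref{provarrho} to $\mathcal{A}(H_1 \square H_2)$. Since $H_1,H_2$ are connected, $H_1\square H_2$ is a connected $k$-uniform hypergraph, so $\mathcal{A}(H_1\square H_2)$ is a nonnegative, symmetric (hence combinatorial symmetric) and weakly irreducible tensor of dimension $N=n_1n_2$, and Theorem~\ref{provarrho} applies. The crux of the reduction is that for such a tensor the structure $\mathbb{V}_{\rho}(\mathcal{A})\cong\mathbb{S}_0(\mathcal{A})\cong\bigoplus_{d_i\neq 1}\mathbb{Z}_{d_i}\oplus(\mathbb{Z}_k)^{N-r-1}$ yields $|\mathbb{V}_{\rho}(\mathcal{A})|=|K|/k$, where $K:=\ker_{\mathbb{Z}_k}(B_{\mathcal{A}})$. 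Concretely, writing $\zeta=e^{2\pi i/k}$ and letting $\mathbf{u}$ be the Perron vector, a class $\mathbf{z}\in\mathbb{Z}_k^{N}$ produces an eigenvector $(\zeta^{z_v}u_v)_v$ for $\rho$ exactly when $B_{\mathcal{A}}\mathbf{z}\equiv\mathbf{0}\pmod k$, and two phase vectors give the same projective point precisely when they differ by a multiple of the all-ones vector $\mathbf{1}$, which lies in $K$ and has order $k$. Thus $|\mathbb{V}_{\rho}(\mathcal{A}(H_1\square H_2))|=|K|/k$, and the whole problem becomes the determination of the finite abelian group $K$.

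First I would pin down the incidence matrix. Ordering the vertices of $H_1\square H_2$ as pairs $(i,j)$ and splitting the edges into the two families of Definition~\ref{Carpro}, one checks that, up to a permutation of rows,
\begin{equation*}
B_{H_1\square H_2} = \begin{pmatrix} I_{n_1}\otimes B_{H_2}\\ B_{H_1}\otimes I_{n_2}\end{pmatrix},
\end{equation*}
where $\otimes$ denotes the Kronecker product: the type~(1) edges contribute $I_{n_1}\otimes B_{H_2}$ and the type~(2) edges contribute $B_{H_1}\otimes I_{n_2}$. Identifying a phase vector with an $n_1\times n_2$ matrix $Z=(z_{ij})$ over $\mathbb{Z}_k$, these two blocks translate into the simultaneous conditions $B_{H_1}Z\equiv 0$ and $ZB_{H_2}^{\mathrm{T}}\equiv 0\pmod k$; that is, every column of $Z$ lies in $\ker_{\mathbb{Z}_k}(B_{H_1})$ and every row of $Z$ lies in $\ker_{\mathbb{Z}_k}(B_{H_2})$. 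This exhibits $K$ as the group of matrices annihilated on the left by $B_{H_1}$ and on the right by $B_{H_2}^{\mathrm{T}}$.

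To count $|K|$ I would diagonalize both constraints at once. Taking integer Smith normal forms $B_{H_1}=U_1D_1V_1$ and $B_{H_2}=U_2D_2V_2$ with $U_t,V_t$ unimodular, the substitution $W=V_1ZV_2^{\mathrm{T}}$ is a bijection of $\mathbb{Z}_k^{n_1\times n_2}$ carrying $K$ onto $\{W:D_1W\equiv 0,\ WD_2^{\mathrm{T}}\equiv 0\pmod k\}$, since $U_1,U_2$ remain invertible modulo $k$. The conditions now decouple entrywise: an entry $w_{ab}$ is free when both indices are beyond the ranks, and otherwise must satisfy $d_aw_{ab}\equiv 0$ and/or $\bar d_bw_{ab}\equiv 0\pmod k$, so its number of admissible values is $k$, $d_a$, $\bar d_b$, or $\gcd(d_a,\bar d_b)$ according to whether $a\le r_1$ and whether $b\le r_2$ (using that the number of $w\in\mathbb{Z}_k$ with $d_aw\equiv 0$ and $\bar d_bw\equiv 0$ is $\gcd(d_a,\bar d_b,k)$, which equals $\gcd(d_a,\bar d_b)$ because the $\mathbb{Z}_k$-invariants divide $k$). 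Multiplying over all $(a,b)$ gives
\begin{equation*}
|K| = k^{(n_1-r_1)(n_2-r_2)}\prod_{i\in[r_1],\,j\in[r_2]}\gcd(d_i,\bar d_j)\prod_{i\in[r_1]}d_i^{\,n_2-r_2}\prod_{j\in[r_2]}\bar d_j^{\,n_1-r_1},
\end{equation*}
and dividing by $k$ yields the asserted formula. The prime case is then immediate: when $k$ is prime $\mathbb{Z}_k$ is a field, all $\mathbb{Z}_k$-invariants equal $1$, and only the leading power $k^{(n_1-r_1)(n_2-r_2)-1}$ survives.

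I expect the main obstacle to be the bookkeeping in the last step when $k$ is composite, since $\mathbb{Z}_k$ is then not a principal ideal domain: one must reconcile the integer Smith normal form with the $\mathbb{Z}_k$-invariants $d_i,\bar d_j$ and ranks $r_1,r_2$ of the statement (an integer elementary divisor divisible by $k$ contributes a free $\mathbb{Z}_k$-direction rather than a genuine invariant, so it must be folded into the exponent of $k$ rather than into a $\gcd$ factor), and confirm that the entrywise count reproduces $\gcd(d_i,\bar d_j)$ and not $\gcd(d_i^{\mathbb{Z}},\bar d_j^{\mathbb{Z}})$. By contrast, the Kronecker description of $B_{H_1\square H_2}$ and the identity $|\mathbb{V}_{\rho}|=|K|/k$ are the conceptual core, after which the result is a routine finite-abelian-group computation.
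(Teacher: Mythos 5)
The paper does not prove this statement; it is imported verbatim from \cite{FTL22} and used as a black box. Your argument is correct and is essentially the standard derivation underlying that citation: the incidence matrix of $H_1\square H_2$ has the Kronecker block form $\bigl(\begin{smallmatrix} I_{n_1}\otimes B_{H_2}\\ B_{H_1}\otimes I_{n_2}\end{smallmatrix}\bigr)$, Theorem~\ref{provarrho} reduces the count to $|\ker_{\mathbb{Z}_k}B_{H_1\square H_2}|/k$, and the two-sided kernel condition $B_{H_1}Z\equiv 0$, $ZB_{H_2}^{\mathrm T}\equiv 0$ decouples entrywise after passing to integer Smith normal forms, with the reconciliation between integer and $\mathbb{Z}_k$-invariants handled exactly as you describe (via $\gcd(d_a,\bar d_b,k)=\gcd(d_a,\bar d_b)$ since the $\mathbb{Z}_k$-invariants divide $k$). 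I see no gap.
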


Let $H_{1}$ and $H_{2}$ be two connected $k$-uniform hypergraphs with spectral radius $\rho_{1}, \rho_{2}$, respectively. It is known that $\rho(H_1 \square H_2) = \rho(H_{1}) + \rho(H_{2})$ \cite{Shao13}. With the notation of Theorem \ref{Carvar}, it follows that
$$
\begin{aligned}
{\rm am}(0, \mathcal{L}(H_1 \square H_2))  =& {\rm am}(\rho_{1} + \rho_{2}, \mathcal{A}(H_1 \square H_2)) \\
=& k^{(n_{1} - r_{1})(n_{2} - r_{2}) - 1} \prod_{i \in [r_{1}], j \in [r_{2}]} \gcd(d_i, \bar{d}_j) \prod_{i \in [r_{1}]} d_i^{n_{2} - r_{2}}
\prod_{j \in [r_{2}]} \bar{d}_j^{n_{1} - r_{1}}.
\end{aligned}
$$
In particular, if $k$ is prime, then
$$
{\rm am}(0, \mathcal{L}(H_1 \square H_2))  = {\rm am}(\rho_{1} + \rho_{2}, \mathcal{A}(H_1 \square H_2)) = k^{(n_{1} - r_{1})(n_{2} - r_{2}) - 1}.
$$

%    Text of article.

%    Bibliographies can be prepared with BibTeX using amsplain,
%    amsalpha, or (for "historical" overviews) natbib style.
\bibliographystyle{amsplain}
%    Insert the bibliography data here.

\end{document}